\theoremstyle{plain}
\newtheorem{theorem}{Theorem}
\newtheorem*{maintheorem}{Main Theorem}
\newtheorem{lemma}[theorem]{Lemma}
\newtheorem{corollary}[theorem]{Corollary}
\newtheorem{proposition}[theorem]{Proposition}
\theoremstyle{definition}
\newtheorem{definition}[theorem]{Definition}
\newtheorem{remark}[theorem]{Remark}
\newtheorem*{assumption}{Assumptions}
\def\bdef{\begin{definition}}
\def\endef{\end{definition}}
\def\bthm{\begin{theorem}}
\def\ethm{\end{theorem}}
\def\blm{\begin{lemma}}
\def\elm{\end{lemma}}
\def\brm{\begin{remark}}
\def\erm{\end{remark}}
\def\bprop{\begin{proposition}}
\def\eprop{\end{proposition}}
\def\bcor{\begin{corollary}}
\def\ecor{\end{corollary}}
\def\be{\begin{eqnarray}}
\def\ee{\end{eqnarray}}
\def\beal{\begin{aligned}}
\def\enal{\end{aligned}}
\def\Om{\Omega}
\def\eps{\varepsilon}
\def\phi{\varphi}
\def\A{\mathbb A}
\def\R{\mathbb R}
\def\T{\mathbb T}
\def\Q{\mathbb Q}
\def\Z{\mathbb Z}
\def\L{\mathcal L}
\def\M{\mathcal M}
\def\~{\tilde}
\def\g{\gamma}
\def\dt{\delta}
\def\lb{\lambda}
\def\cE{\mathcal E}
\def\be{\begin{equation}}
\def\ee{\end{equation}}
\def\bdef{\begin{definition}}
\def\endef{\end{definition}}
\def\blm{\begin{lemma}}
\def\elm{\end{lemma}}
\def\beal{\begin{aligned}}
\def\enal{\end{aligned}}
\newtheorem*{Pf}{Proof}
\renewenvironment{proof}{\begin{Pf} \begin{upshape}} {\end{upshape} \qed\end{Pf}}
\title[On the M.L.S. of generic strictly convex billiard tables]{On {the} Marked length {spectrum} of generic strictly convex billiard tables}
\author{Guan Huang}
\author{Vadim Kaloshin} 
\author{Alfonso Sorrentino}
\address{Department of Mathematics, University of Maryland at College Park, College Park, MD 20740, US.}
\email{guan@math.umd.edu}
\address{Department of Mathematics, University of Maryland at College Park, College Park, MD 20740, US.}
\email{vadim.kaloshin@gmail.com}
\address{Dipartimento di Matematica, Universit\`a degli Studi di Roma ``Tor Vergata'', Rome, Italy.}
\email{sorrentino@mat.uniroma2.it}
\date{\today}
\subjclass[2010]{(Primary) 35P30, 37D50, 37E40, 37J50; 
(Secondary) 35P05, 37D05, 37J45}
\begin{document}

\maketitle

\begin{abstract}In this paper we show that for a generic strictly convex domain, one can recover the eigendata corresponding to Aubry-Mather periodic orbits of the induced billiard map, from   the (maximal) marked length spectrum of the domain.
\end{abstract}
\section{Introduction}
A mathematical billiard is a system describing the inertial motion of a point mass inside a domain with elastic reflections at the boundary (which is assumed to have infinite mass). This simple model has been first proposed by Birkhoff \cite{Birkhoff} as a mathematical playground where: ``{\it the formal side, usually so formidable in dynamics, almost completely disappears and only the interesting qualitative questions need to be considered.}''

Since then billiards have become a very popular subject. Not only is their law of motion very physical and intuitive, but billiard-type dynamics is ubiquitous. Mathematically, they offer models in every subclass of dynamical systems (integrable, regular, chaotic, etc). More importantly, techniques initially devised for billiards have often been applied and adapted to other systems, becoming standard tools and having ripple effects beyond the field. 

 Moreover, despite their apparently simple (local) dynamics, 
their qualitative dynamical properties are extremely {non-local}! This global influence
on the dynamics translates into several intriguing {rigidity  phenomena}, which
are at the basis of many unanswered questions and conjectures. For instance, while the dependence of the dynamics on the geometry of the domain is well perceptible, an intriguing challenge is to understand to which extent dynamical information can be used to reconstruct the shape of the domain.
In this article, we will address this inverse problem in the case of periodic orbits in a strictly convex smooth planar domain $\Omega$.\\

The study of periodic orbits for billiard maps in strictly convex planar domains has been among the first dynamical features of billiards that have been investigated. One of the first results in the theory of billiards, for example, can be considered  Birkhoff's application of Poincare's last geometric theorem to show
the existence of infinitely many periodic orbits, which can be topologically distinguished in terms
of their {\it rotation number}\footnote{The rotation number  of a periodic billiard trajectory
is a rational number that can be roughly defined as
\[
\dfrac{p}{q}
\ =\
\dfrac{\text{winding number}}
{\text{number of reflections}}\ \in\ \big(0,\frac 12\Big],
\]
where the winding number $p>1$ is defined as follows.
Fix the positive orientation of $\partial \Om$ and
pick any reflection point of the closed geodesic on 
$\partial \Om$; then follow the trajectory and count
how many times it goes around $\partial \Om$ in
the positive direction until it comes back to
the starting point.
Notice that inverting the direction of motion for every
periodic billiard trajectory  of rotation number
$p/q  \in (0, 1/2]$, we obtain an orbit of rotation number
$(q-p)/q \in [1/2,1)$.}.
In \cite{Birkhoff} Birkhoff proved that for every rotation number $p/q \in (0, 1/2]$ in lowest terms,
there are at least two closed orbits of rotation number $p/q$: one maximizing the total length and the other obtained by min-max methods (see also  \cite[Theorem 1.2.4]{Siburg}).
This result is clearly optimal: in the case of a billiard in an ellipse, for example, there are only two periodic orbits of period $2$ (also called {\it diameters}), which correspond to the two semi-axis of the ellipse. However, it is easy to find cases in which there are more than two periodic orbits for a given rotation number: think, for example, of a billiard in a disk where, due to the existence of a $1$-dimensional group of symmetries (rotations), each periodic orbit generates a $1$-dimensional family of similar ones (all diameters are periodic orbits with period $2$).\\

A natural question is to understand which information on the geometry of the billiard domain, 
the set of periodic orbits does encode. More ambitiously, one could wonder whether a complete 
knowledge of this set  allows one to reconstruct the shape of the billiard  and hence the whole of the dynamics.\\

Let us start by introducing the {\it length spectrum} of a domain $\Om$.\\

\begin{definition}[{\bf Length Spectrum}] {\it
Given a domain $\Omega$, the length spectrum of $\Omega$ is given by the set  of lengths
of its periodic orbits, counted with multiplicity:
\[
\L_\Om:=\mathbb N\cdot \{\text{lengths of closed geodesics in }\Om\}
\cup \mathbb N \cdot \ell(\partial \Omega),
\]
where $\ell(\partial \Omega)$ denotes the length of the boundary.}\\
\end{definition}

\begin{remark}
A remarkable relation exists between the length spectrum
of a billiard in a convex domain $\Omega$  and the spectrum of the Laplace operator
in $\Omega$ with Dirichlet boundary condition (similarly for Neumann boundary one):
\begin{equation}\label{DirichletProblem}
\left\{
\begin{array}{l}
\Delta f = \lb f \quad \text{in}\; \Om \\
f|_{\partial \Om} = 0.\end{array}\right.\\
\end{equation}

From the physical point of view, the eigenvalues $\lb$
are the eigenfrequencies of the membrane $\Om$ with
a fixed boundary.

K. Andersson and R. Melrose \cite{AM}
proved the following relation between the Laplace spectrum 
and the length spectrum. Call the function 
\[
w(t):=\sum_{\lb_i \in spec \Delta}
cos (t \sqrt{-\lb_i}),
\]
the wave trace.
\\

\noindent {\bf Theorem (Andersson-Melrose).}
{\it The wave trace $w(t)$  is a well-defined 
generalized function (distribution) of $t$, smooth 
away from the length spectrum, namely, 
\begin{equation}\label{AndersonMelroseformula}
\mbox{\it sing. \!\!\!\! supp.} \big( w(t) \big)\subseteq  \pm \L_\Om \cup \{0\}.
\end{equation}
So if $l > 0$ belongs to the singular support of 
this distribution, then there exists either a closed billiard 
trajectory of length $l$, or a closed geodesic of length 
$l$ in the boundary of the billiard table.}\\
\end{remark}

Generically, {equality holds in \eqref{AndersonMelroseformula}}. More precisely, if 
no two distinct orbits have the same length and 
the Poincar\'e map of any periodic orbit is non-degenerate, 
then the singular support of the wave trace coincides 
with $\pm \L_\Om \cup \{0\}$ (see e.g. \cite{Popov}).\\

This theorem implies that,  at least for generic domains, one can recover the length spectrum from the Laplace one. This relation between periodic orbits and spectral properties of the domain, immediately recalls a more famous spectral problem (probably the most famous): {\it Can one hear the shape of a drum?}, as formulated in a very suggestive way by Mark Kac \cite{Kac} (although the problem had been already stated by Hermann Weyl). More precisely: is it possible to infer information about the shape of a drumhead ({\it i.e.}, a domain) from the sound it makes ({\it i.e.}, the list of basic harmonics/ eigenvalues of the Laplace operator with Dirichlet or Neumann boundary conditions)?
This question has not been completely solved yet: there are several negative answers (for instance by Milnor \cite{Milnor}
and Gordon, Webb, and Wolpert \cite{GordonWebbWolpert}), 
as well as some positive ones.  

Hezari--Zelditch, {going in the
  affirmative direction,} proved in~\cite{HZ} that, given an
ellipse $\cE$, any one-parameter $C^\infty$-deformation
$\Om_\eps$ which preserves the Laplace spectrum (with
respect to either Dirichlet or Neumann boundary conditions)
and the $\mathbb Z_2\times\mathbb Z_2$ symmetry group of 
the ellipse has to be \emph{flat} ({\it i.e.}, all derivatives have to
vanish for $\eps = 0$). {Popov--Topalov \cite{PT} recently 
extended these results (see also \cite{Zelditch}).} Further 
historical remarks on the inverse spectral problem can  be also 
found in~\cite{HZ}.
In~\cite{Sarnak}, P. Sarnak conjectures that the set of
  smooth convex domains isospectral to a given smooth convex
  domain is finite; {{for a} partial progress on this question,  see
  \cite{DKW}.}

\vspace{20 pt}

One of the difficulties in working with the length spectrum   is that all of these information come in a non-formatted way. For example, we lose track of the rotation number corresponding to each length.
A way to overcome this difficulty is to ``organize'' this set of information in a more systematic way, for instance by associating to each length the corresponding rotation number.  This new set  is called the 
{\it marked length spectrum} of $\Omega$ and denoted by $\M\L_\Om$.\\

One could also refine this set of information by considering not the lengths of all orbits, but selecting some of them.  More precisely, for each rotation number $p/q$ in lowest terms, one could consider the maximal
length among those having rotation number $p/q$. We call this map  $\M\L^{\rm max}_\Om: \Q\cap\big(0,\frac 12\Big] \longrightarrow \R_+ $ the {\it maximal marked length spectrum}: 
\begin{eqnarray*}
\M\L^{\rm max}_\Om({p}/{q}) =   \max \Big \{ \mbox{lengths of periodic orbits with rot. number}\; p/q \Big \}.
\end{eqnarray*}
This map is closely related to Mather's minimal average action (or $\beta$-function) and we will explain it in Section  \ref{sec3} (see also \cite{Siburg, Sor2015}).\\

\subsection{Main result}

In \cite[pp. 677-678]{GM} V. Guillemin and R. Melrose ask whether the Length Spectrum  and the eigenvalues of the linearizations of the (iterated) billiard map at periodic orbits, constitute a complete set of symplectic invariants for the system.

 Our main result shows that for generic domains, the eigendata corresponding to Aubry-Mather periodic orbits ({\it i.e.}, periodic orbits of maximal perimeter among those with the same rotation number ) can be actually recovered from the (Maximal) Marked Length Spectrum. More precisely:\\

\begin{maintheorem} 
For a generic strictly convex $C^{\tau+1}$-billiard table $\Omega$ ($\tau\geqslant2$), we have that for each $p/q\in\mathbb{Q}\cap(0,1/2]$ in lowest terms:
\begin{enumerate}
\item {The following  limit exists
$$\lim_{N\to+\infty} \left[ \M\L^{\rm max}_\Om\left(\frac{Np}{Nq-1}\right)-N\cdot\M\L^{\rm max}_\Om\left(\frac{p}{q}\right) \right] = -B_{p/q},$$
where $B_{p/q}$ denotes the minimum value of Peierls' Barrier function of rotation number $p/q$ (see Section \ref{sec5}).}
\item Moreover:
$$\lim_{N\to+\infty}\frac{1}{N}\log\Big|\M\L^{\rm max}_\Om(\frac{Np}{Nq-1})-N\cdot\M\L^{\rm max}_\Om(\frac{p}{q})+B_{p/q}\Big|=\log\lambda_{p/q},$$
where $\lambda_{p/q}$ is the eigenvalue of the linearization of the Poincare return map at the Aubry-Mather periodic orbit with rotation number $\frac{p}{q}$. 
\end{enumerate}
\end{maintheorem}

{See Theorem \ref{theorem1bis} in Section \ref{sec4} for a rephrasing of item (2) in the previous theorem 
in terms of Mather's $\beta$-function (which will be introduced in Section \ref{sec3}}).\\

The set of generic billiard tables is a (Baire) generic set, {\it i.e.}, a set that contains a countable intersection 
of open dense sets. See  {Section \ref{sec4} for a precise set of genericity assumptions.} 

{
\begin{remark}
Notice that for exact area-preserving twist maps, all of the above objects (Aubry-Mather periodic orbits, 
Peierls' barrier and Mather's $\beta$-function) are well defined and the argument in the proofs continue to be valid. Hence, our Main Theorem could be rephrased in terms of generic $C^{\tau+1}$ smooth exact area-preserving twist map, for $\tau\ge 2$. However, being our primary interest in this problem  motivated by spectral questions in billiard dynamics, 
we have opted to focus the presentation of our main results on this context.
\end{remark}
}

\begin{remark}
A natural question is the following: does the limit in item (2) always exist? If yes, does it determine to the eigenvalue 
$\lb_{p/q}$?\\
In \cite{XZ} the authors show that for a generic domain every hyperbolic periodic orbit
admits some homoclinic orbit. This raises the following question:
 Can one recover 
the eigenvalue of the linearization of the Poincare return map at any hyperbolic periodic orbit for 
a generic domain from the Marked Length Specturm?  \\
See Remark  \ref{rm:general-case2} for a more explicit connection between homoclinic orbits and our construction, 
and a description of the obstacles that one needs to overcome in order to extend our result to {a more general setting}.\\
\end{remark}

\begin{remark}
Quite interestingly, our main result could be applied to  identify for which irrational rotation number 
there exists or does not exist an invariant curve ({\it i.e.,} a caustic) with that rotation number.  In \cite{Greene}, 
J. Greene conjectured a criterion to test the existence of such curves (nowadays called ``Greene's residue criterion''), 
which was tested numerically in the case of the standard map. We recall here a version of this criterion as 
conjectured in \cite{MacKay}.\\

{\it Let $f$ be a symplectic twist map of the annulus  and  let $\rho \in \R$ be an irrational number. Consider a sequence 
of rational numbers $\frac{p_n}{q_n} \longrightarrow \rho$ as $n$ goes to $+\infty$ and for any minimizing periodic 
point $X_n$ of rotation number $\frac{p_n}{q_n}$ associates to it its {\it residue}, given by
$
r_n= \frac{1}{4}\left(  2- {\rm Tr}(Df^{q_n}(X_n))\right).
$
Then, the limit $\lim_{n\rightarrow +\infty} |r_n|^{1/q_n}=\mu(\rho)$ exists. Moreover, $\mu(\rho)\leq 1$ if and only if  
there exists an invariant curve with rotation number $\rho$.}\\

In \cite[Theorem 3]{AB}, M.-A. Arnaud and P. Berger proved a part of this criterion (the ``only if''). More specifically, 
they proved that if 
$$\limsup_{n\rightarrow +\infty} |r_n|^{1/q_n}>1,$$ then there is no homotopically non-trivial 
invariant curve with rotation number $\rho$.\\
Our result allows one to obtain a lower-bound for this limsup at all irrational rotation number and hence apply the above result to deduce the
non-existence of invariant curves.
\end{remark}

\vspace{20 pt}

\noindent {\bf Organization of the article.}
For the reader's convenience, in Sections \ref{sec2} and \ref{sec3} we provide some background material on billiard maps and Aubry-Mather theory, as well as their mutual relation.\\
 In Section \ref{sec4} we explain our genericity assumptions and in Section \ref{sec5} we prove how to approximate Peierls' barrier by means of elements in the length spectrum 
 {and prove assertion (1) in  Main Theorem}. \\
All of this, will be exploited in Section \ref{sec6} for the proof of the assertion (2) in  Main Theorem.
{More in details, a large part of this section we will consists in the proof of sort of normal  form for  
Peierls' barrier (see Theorem \ref{theorem1bis}), which  will satisfy some generic non-degeneracy condition 
(Lemma \ref{generic-lemma}). This will be enough to prove our main result.
For the reader's convenience, we outline here the main ideas involved in the construction of the normal form.
{
\begin{itemize}
\item Fix an Aubry-Mather periodic orbit of rotation number $p/q$.  
\item Choose a sequence of Aubry-Mather periodic orbits of rotation number {$pN/(Nq-1), N\ge 2$},  
approximating a homoclinic point of the $p/q$ Aubry-Mather periodic orbit. 
\item It turns out that a properly chosen linear combination of perimeters has a well-defined limit given by 
a properly chosen Peierls' barrier.
\item Moreover, the speed of convergence to the limit determines the eigenvalues of the $p/q$ Aubry-Mather 
periodic orbit.\\
\end{itemize}}} 

\vspace{10 pt}

\noindent {\bf Acknowledgement} The authors thank Ke Zhang for useful conversations. {The authors are also grateful to the referee for her/his valuable remarks and suggestions.}
V.K. has been partially support of the NSF grant 
DMS-1402164. A.S. has been partially supported by the PRIN-2012-74FYK7 grant ``{\it Variational and perturbative aspects of nonlinear differential problem}''.

\vspace{40 pt}


\section{The billiard map}\label{sec2}

In this section we would like to recall some properties of the billiard map. We refer to \cite{Siburg, Tabach} for a more comprehensive introduction to the study of billiards.\\

Let $\Omega$ be a strictly convex domain in $\R^2$ with $C^{\tau+1}$ boundary $\partial \Omega$,
with $\tau\geq 2$. The phase space $M$ of the billiard map consists of unit vectors
$(x,v)$ whose foot points $x$ are on $\partial \Omega$ and which have inward directions.
The billiard ball map $f:M \longrightarrow M$ takes $(x,v)$ to $(x',v')$, where $x'$
represents the point where the trajectory starting at $x$ with velocity $v$ hits the boundary
$\partial \Omega$ again, and $v'$ is the { reflected velocity}, according to
the standard reflection law: angle of incidence is equal to the angle of reflection (figure \ref{billiard}).

\begin{remark}
Observe that if $\Omega$ is not convex, then the billiard map is not continuous.
Moreover, as pointed out by Halpern \cite{Halpern}, if the boundary is not at
least $C^3$, then the flow might not be complete.
\end{remark}

Let us introduce coordinates on $M$.
We suppose that $\partial \Omega$ is parametrized  by  arclength $s$ and
let $\g:  [0, l] \longrightarrow \R^2$ denote such a parametrization,
where $l=l(\partial \Omega)$ denotes the length of $\partial \Omega$. Let $\phi$
be the angle between $v$ and the positive tangent to $\partial \Omega$ at $x$.
Hence, $ M$ can be identified with the annulus $\A = [0,l] \times (0,\pi)$
and the billiard map $f$ can be described as

\begin{eqnarray*}
f: [0,l] \times (0,\pi) &\longrightarrow& [0,l] \times (0,\pi)\\
(s,\phi) &\longmapsto & (s',\phi').
\end{eqnarray*}

In particular $f$ can be extended to $\bar{\A}=[0,l] \times [0,\pi]$ by fixing
$f(s,0)=f(s,\pi)= {\rm Id}$,
for all $s$. 

\begin{figure} [h!]
\begin{center}
\includegraphics[scale=0.3]{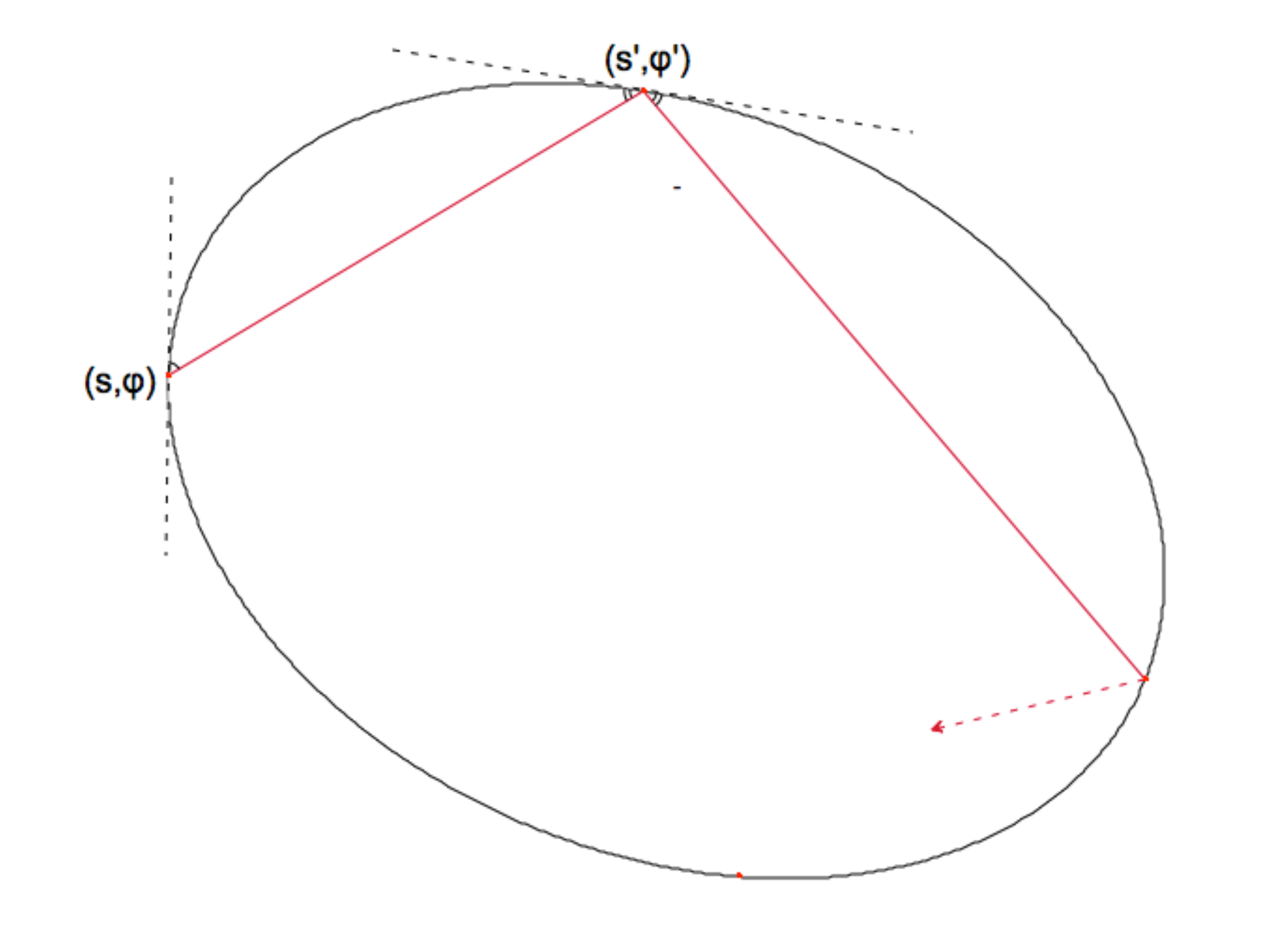}
\caption{}
\label{billiard}
\end{center}
\end{figure}

Let us denote by
\begin{equation}\label{length}
\ell(s,s') := \|\g(s) - \g(s')\|
\end{equation}
the Euclidean distance between two points on $\partial \Omega$. It is easy to prove that
\begin{equation}
\left\{ \begin{array}{l}
\dfrac{\partial \ell}{\partial s}(s,s') = - \cos \phi \\
\\
\dfrac{\partial \ell}{\partial s'}(s,s') = \cos \phi'\,.\\
\end{array}\right.
\end{equation}

\begin{remark}
If we lift everything to the universal cover and introduce new coordinates
$(\tilde{s},r)=(s, \cos \phi) \in \R \times (-1,1)$, then the billiard map is a twist map
with $\ell$ as generating function. See \cite{Siburg, Tabach}.\\
\end{remark}

Particularly interesting billiard orbits are {periodic orbits}, {\it i.e.}, billiard orbits
$X=\{x_k\}_{k\in\Z}:=\{(s_k,\phi_k)\}_{k\in \Z}$ for which there exists an integer $q\geq 2$ such that
$x_k=x_{k+q}$ for all $k\in \Z$. The minimal of such $q's$ represents the {\it period} of the orbit.
However periodic orbits with the same period may be of very different topological types. A useful topological invariant that allows one to distinguish amongst them is the so-called {\it rotation number}, which can be easily defined as follows. Let $X$ be a periodic orbit of period $q$ and consider the corresponding $q$-tuple $(s_1,\ldots, s_q) \in \R/l\Z$. For all $1\leq k \leq q$, there exists $\lambda_k \in (0,l)$ such that $s_{k+1}=s_k+\lb_k$  (using  the periodicity, $s_{q+1}=s_1$). Since the orbit is periodic, then $ \lb_1 +\ldots + \lb_k \in l\Z$ and takes value between $l$ and $(q-1)l$. The integer $p:= \frac{\lb_1 +\ldots + \lb_k}{l}$ is called the {\it winding number} of the orbit. The rotation number of $X$ will then be the rational number $\rho(X):=\frac{p}{q}$.
Observe that changing the orientation of the orbit replaces the rotation number $\frac{p}{q}$ by
$\frac{q-p}{q}$. Since, for the purpose of our result, we do not distinguish between two opposite orientations, then we can assume that $\rho(X) \in (0,\frac{1}{2}\big] \cap \Q$.\\

In \cite{Birkhoff}, as an application of Poincare's last geometric theorem, Birkhoff proved the following result.\\

\noindent{\bf Theorem [Birkhoff]}
{\it For every $p/q \in (0, 1/2]$ in lowest terms, there are at least two geometrically distinct periodic billiard trajectories with rotation number $p/q$}.\\

\begin{remark}
 In \cite{Lazutkin} V. Lazutkin  introduced a very special change of coordinates that reduces 
the billiard map $f$  to a very simple form.

Let $L_\Omega:  {[0,l]\times [0,\pi] \to  \T \times [0,\dt]}$  with small 
$\dt>0$ be given by
\begin{equation*}
L_\Omega(s,\phi)=\left(x=C^{-1}_\Omega \int_0^s \rho^{{2/3}}(s)ds,
\quad
y=4C_\Omega^{-1}\rho^{{-1/3}}(s)\ \sin \phi/2 \right),
\end{equation*}
where $\rho(s)$ is its radius of curvature at $s$ and $C_\Omega := \int_0^l \rho^{{2/3}} (s)ds$ is sometimes 
called the {\it Lazutkin perimeter} (observe that it is chosen so 
that period of $x$ is one).

In these new coordinates the billiard map becomes very simple (see \cite{Lazutkin}):

\begin{equation*}
f_L(x,y) = \Big( x+y +O(y^3),y + O(y^4) \Big).
\end{equation*}
In particular, near the boundary $\{\phi=0\} = \{y=0\}$, the billiard map $f_L$ reduces to
a small perturbation of the integrable map $(x,y)\longmapsto (x+y,y)$.

Using this result and a version of KAM theorem, Lazutkin proved in \cite{Lazutkin}
that if $\partial \Omega$
is sufficiently smooth (smoothness is determined by KAM theorem),
then there exists a positive measure set of invariant curves (corresponding to {\it caustics}), which accumulates on
the boundary and on which the motion is smoothly conjugate to a rigid rotation.\\
\end{remark}


\section{Aubry-Mather theory  and billiards.} \label{sec3}

At the beginning of the eighties Serge Aubry and John Mather developed, independently, what nowadays is commonly called {\it Aubry--Mather theory}.
This novel approach to the study of the dynamics of twist diffeomorphisms of the annulus, pointed out the existence of many {\it action-minimizing orbits}  for any given rotation number (for a more detailed introduction,  see  for example \cite{Bangert, MatherForni, Siburg, SorLecNotes}).

More precisely, let $f: \R/\Z \times \R \longrightarrow \R/\Z \times\R$ a monotone twist map, {\it i.e.}, a $C^1$ diffeomorphism such that its lift  to the universal cover $\tilde{f}$ satisfies the following properties (we denote $(x_1,y_1)= \tilde{f}(x_0,y_0)$):
\begin{itemize}
\item[(i)] $\tilde{f}(x_0+1, y_0) = \tilde{f}(x_0, y_0) + (1,0)$,
\item[(ii)] $\frac{\partial x_1}{\partial y_0} >0$  (monotone twist condition),
\item[(iii)] $\tilde{f}$ admits a (periodic) generating function $h$ ({\it i.e.},  it is an exact symplectic map):
$$
y_1\,dx_1 - y_0\,dx_0 = dh(x_0,x_1).
$$
\end{itemize}
In particular, it follows from (iii) that:

\begin{equation} \label{genfuncttwistmap}
\left\{
\begin{array}{l}
y_1 = \frac{\partial h}{\partial x_1}(x_0,x_1)\\
y_0 = - \frac{\partial h}{\partial x_0}(x_0,x_1)\,.
\end{array}
\right.
\end{equation}

\begin{remark}
The billiard map $f$ introduced above is an example of monotone twist map. In particular, its generating function 
is given by $h(x_0, x_1) = - \ell(x_0, x_1)$, where $\ell(x_0, x_1)$ denotes the euclidean distance between the two points on the boundary of the billiard domain corresponding
to $\gamma(x_0)$ and $\gamma(x_1)$.
\end{remark}

As it follows from (\ref{genfuncttwistmap}), orbits $x=\{x_i\}_{i\in\Z}$ of the monotone twist diffeomorphism $f$ correspond to critical points  of the {\it action functional}
$$
\{x_i\}_{i\in\Z} \longmapsto \sum_{i\in \Z} h(x_i, x_{i+1}).
$$

Aubry-Mather theory is concerned with the study of orbits that minimize this action-functional amongst all configurations with a prescribed rotation number; recall that the rotation number of an orbit $\{x_i\}_{i\in\Z}$ is given by $\pi \omega = \lim_{i\rightarrow \pm \infty} \frac{x_i}{i}$, if this limit exists (in the billiard case, this definition leads to the same  notion of rotation number introduced in subsection 1.2). In this context, {\it minimizing} is meant in the statistical mechanical sense, {\it i.e.}, every finite segment of the orbit minimizes the action functional with fixed end-points.\\

\noindent {\bf Theorem (S. Aubry, J. Mather).} {\it A monotone twist map possesses minimal orbits for every rotation number. For rational numbers there are always at least two periodic minimal orbits. Moreover, every minimal orbit lies on a Lipschitz graph over the $x$-axis.}\\

Let us denote by $\mathcal{M}_\omega$ the set of minimal trajectories $\underline{x}=\{x_i\}_{i\in\Z}$ with rotation number $\omega$ and by $\mathcal{M}^{\rm rec}_\omega$ the subset of recurrent ones. One can provide a detailed description of the structure of these sets (see \cite{Bangert, MatherForni}):
\begin{itemize}
\item  If $\omega \in \R\setminus \Q$, then $\mathcal{M}_\omega$ is totally ordered; moreover, there exist a map $f:\R \rightarrow \R$, which is the lift of an orientation-preserving circle homeomorphism  with rotation number $\omega$,  and a closed ${f}$-invariant set $A_\omega \subset \R$, such that $\mathcal{M}_\omega$ consists of the orbits of $f$ contained in $A_\omega$. Namely, $\underline{x}\in \mathcal{M}_\omega$ if and only if $x_0\in A_\omega$ and $x_i=f^i(x_0)$ for all $i\in \Z$. The projection $p_0$ (which to each $x=\{x_i\}_{i\in \Z}$ associates $x_0$) maps $\mathcal{M}_\omega$ homeomorphically into $A_\omega$.
Furthermore, $\underline{x}\in \mathcal{M}^{\rm rec}_\omega$ if and only if $x_0$ is a recurrent point of $f$. \\
\item If $\omega=\frac{p}{q}\in \Q$ (with $p$ and $q$ relatively prime), then $\mathcal{M}_\omega$ is the union of three disjoint and non-empty sets: 
$$\mathcal{M}^{\rm per}_\frac{p}{q} \cup
\mathcal{M}^{+}_\frac{p}{q} \cup
\mathcal{M}^{-}_\frac{p}{q}.$$
$\mathcal{M}^{\rm per}_\frac{p}{q}$ denotes the set of periodic minimal ones of rotation number $\frac{p}{q}$. We say that two elements 
$\underline{x}_-<\underline{x}_+$ of $\mathcal{M}^{\rm per}_\frac{p}{q}$ are neighboring if  there is no other element of $\mathcal{M}^{\rm per}_\frac{p}{q}$  between them.
We consider the sets $\mathcal{M}^{+}_\frac{p}{q}(\underline{x}_-,\underline{x}+)$ of all minimal orbits of rotation number $\frac{p}{q}$ that are asymptotic in the past ({\it i.e.}, 
as $i\rightarrow -\infty$) to $\underline{x}_-$ and in the future to $\underline{x}_+$. We define
$$\mathcal{M}^{+}_\frac{p}{q}= \bigcup_{(\underline{x}_-,\underline{x}_+)} \mathcal{M}^{+}_\frac{p}{q}(\underline{x}_-,\underline{x}_+),$$
where $(\underline{x}_-,\underline{x}_+)$ varies among all neighboring elements of $\mathcal{M}^{\rm per}_\frac{p}{q}$.\\
In a similar way, one defines $\mathcal{M}^{-}_\frac{p}{q}$ (just reverse the behaviours in the past and in the future).\\
Usually orbits in $\mathcal{M}^{\pm}_\frac{p}{q}$ are said to have rotation symbol $\frac{p}{q}\pm$.\\
\end{itemize}

We can now introduce the {\it minimal average action} (or {\it Mather's $\beta$-function}).

\begin{definition}\label{defbeta}
Let $x^{\omega} = \{x_i\}_{i\in\Z}$ be any minimal orbit with rotation number $\omega$. Then, the value of the {\em minimal average action} at $\omega$ is given by (this value is well-defined, since it does not depend on the chosen orbit):
\begin{equation}\label{avaction}
\beta(\omega) = \lim_{N\rightarrow +\infty} \frac{1}{2N} \sum_{i=-N}^{N-1} h(x_i,x_{i+1}).
\end{equation}
\end{definition}

This function $\beta: \R \longrightarrow \R$ enjoys many properties and encodes interesting information on the dynamics. In particular:
\begin{itemize}
\item[i)] $\beta$ is strictly convex and, hence, continuous (see \cite{MatherForni});
\item[ii)] $\beta$ is differentiable at all irrationals (see \cite{Mather90});
\item[iii)] $\beta$ is differentiable at a rational $p/q$ if and only if there exists an invariant circle consisting of periodic minimal orbits of rotation number $p/q$ (see \cite{Mather90}).\\
\end{itemize}

In particular, being $\beta$ a convex function, one can consider its convex conjugate:
$$
\alpha( c ) = \sup_{\omega\in \R} \left[ \omega \, c - \beta(\omega)\right].
$$

This function -- which is generally called {\it Mather's $\alpha$-function} -- also plays an important r\^ole in the study minimal orbits and in Mather's theory. We refer interested readers to surveys \cite{Bangert, MatherForni, Siburg, SorLecNotes}.\\

Observe that for each $\omega$ and $c$ one has:
$$
\alpha( c ) + \beta( \omega) \geq \omega  c,
$$
where equality is achieved if and only if $c\in \partial \beta(\omega)$ or, equivalently, if and only if  $\omega \in \partial \alpha (c )$  (the symbol $\partial$ denotes in this case the set of `subderivatives'  of the function, which is always non-empty and is a singleton if and only if the function is differentiable).\\

In the billiard case, since the generating function of the billiard map is the euclidean distance $-\ell$, the action of the orbit coincides -- up to a sign -- to the length of the trajectory that the ball traces on the  table $\Omega$. In particular, these two functions encode many dynamical  properties of the billiard ({see \cite{Siburg} for more details}):

\begin{itemize}
\item  For each $0<p/q\leq 1/2$, one has:
         \begin{equation}\label{betaandMLS}
         \beta(p/q) = - \frac{1}{q} {\mathcal ML}^{\rm max}_\Om({p}/{q}).
         \end{equation}
 \item $\beta$ is differentiable at $p/q$ if and only if there exists a caustic of rotation number $p/q$ ({\it i.e.}, all tangent orbits are periodic of rotation number $p/q$).
 \item If $\Gamma_{\omega}$ is a caustic with rotation number $\omega \in (0,1/2]$, then $\beta$ is differentiable at $\omega$ and $\beta'(\omega)= - {\rm length}(\Gamma_{\omega}) =: -|\Gamma_{\omega}|$ (see \cite[Theorem 3.2.10]{Siburg}). In particular,   $\beta$ is always differentiable at $0$ and $\beta'(0)= - |\partial \Omega|$.
 \item If $\Gamma_{\omega}$ is a caustic with rotation number $\omega \in (0,1/2]$, then one can associate to it another invariant, the so-called {\it Lazutkin invariant} $Q(\Gamma_{\omega})$.  More precisely 
\begin{equation}\label{lazinv}
Q(\Gamma_{\omega}) = |A-P| + |B-P| - |\stackrel \frown {AB} |
\end{equation}
 where $|\cdot |$ denotes the euclidean length and $|\stackrel \frown {AB}|$ the length of the arc on the caustic joining $A$ to $B$ (see figure \ref{lazutkin}).
 
This quantity is connected to the value of the $\alpha$-function. In fact, one can show that (see \cite[Theorem 3.2.10]{Siburg}):
 $$Q(\Gamma_\omega) = \alpha(  \beta'(\omega)  ) = \alpha( - |\Gamma_{\omega}|).$$
 
\begin{figure} [h!]
\begin{center}
\includegraphics[scale=0.7]{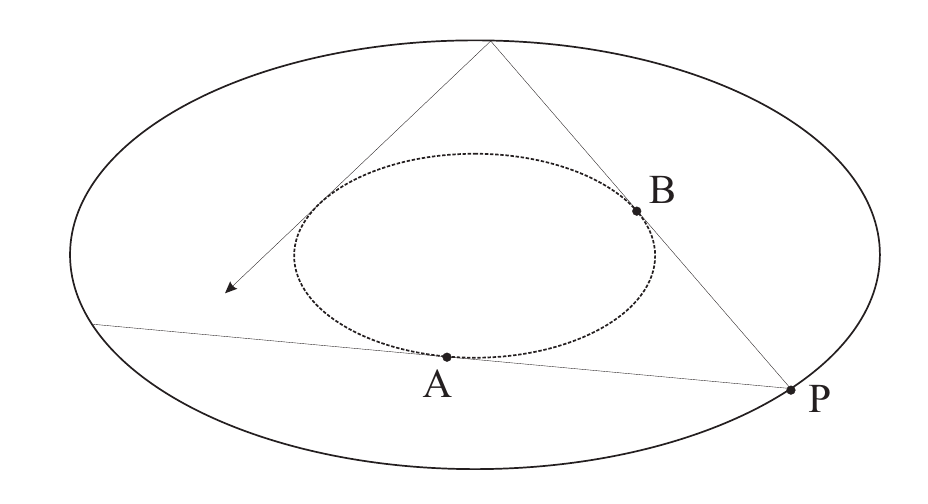}
\caption{Lazutkin invariant}
\label{lazutkin}
\end{center}
\end{figure}

 \end{itemize}


\section{The generic assumptions} \label{sec4}
Let $f:  (s,r)\to (s',r')$  denote the billiard map  corresponding to a strictly convex domain $\Omega$,  parametrized by arclength  $s$, and $h(s,s')=-\ell(s,s')$ (see \eqref{length}) denote the corresponding generating function. Then we have 
$$\begin{cases}r=-\partial_1h(s,s')\\ r'=\partial_2h(s,s').\end{cases}$$ 
Moreover
\begin{equation}\label{derivative1}Df(s,r)=\left(\begin{array}{cc}-\frac{\partial_{11}h(s,s')}{\partial_{12}h(s,s')}&-\frac{1}{\partial_{12}h(s,s')}\\
\partial_{12}h(s,s')-\partial_{22}h(s,s')\frac{\partial_{11}h(s,s')}{\partial_{12}h(s,s')}&\frac{-\partial_{22}h(s,s')}{\partial_{12}h(s,s')}\end{array}\right)\end{equation}
and
\begin{equation}\label{derivative2}Df^{-1}(s',r')=\left(\begin{array}{cc}-\frac{\partial_{22}h(s,s')}{\partial_{12}h(s,s')}&\frac{1}{\partial_{12}h(s,s')}\\
\partial_{11}h(s,s')\frac{\partial_{22}h(s,s')}{\partial_{12}h(s,s')}-\partial_{12}h(s,s')&\frac{-\partial_{11}h(s,s')}{\partial_{12}h(s,s')}\end{array}\right).\end{equation}
Here and after, we denote 
$$\partial_1h=\partial_{s}h,\quad \partial_{2}h=\partial_{s'}h,\quad \partial_{11}=\partial_{s}^2h,\quad\partial_{22}=\partial_{s'}^2h,\quad \partial_{12}h=\partial_s\partial_{s'}h.\\$$

\vspace{10 pt}

Let us describe our main generic assumptions:
\begin{assumption} 
For each $0<p/q\in\mathbb{Q}$ in lowest terms,
\begin{enumerate}
\item There exists a unique minimal periodic orbit in $\mathcal{M}^{\rm per}_\frac{p}{q}$.
\item The minimal periodic orbit is hyperbolic. 
\item The stable and unstable manifolds of the minimal periodic orbit intersect trasversally.
\end{enumerate}
\end{assumption}
Under these assumptions, we have the following well known  fact due to Aubry-Mather theory (see, e.g. \cite{MatherForni}).

\begin{proposition} \label{prop1}
For every $0<p/q\in\mathbb{Q}$ in lowest term,
there exists a unique minimal orbit in $\mathcal{M}^{+}_\frac{p}{q}$.\\
\end{proposition}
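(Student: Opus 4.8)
The plan is to combine the Aubry--Mather structure theory recalled above with the hyperbolicity and transversality hypotheses, reducing the statement to one about minimizing transverse heteroclinic connections of a hyperbolic periodic orbit. First I would fix notation: lift everything to the universal cover and let $\underline x=(x_i)_{i\in\Z}$, with $x_{i+q}=x_i+p$, denote the configuration of the minimal periodic orbit, which by assumption (1) is unique up to integer translations; its integer translates $\underline x+n$ exhaust $\mathcal M^{\rm per}_{p/q}$, and $\underline x+n<\underline x+n+1$ are neighbouring. Since the minimal periodic orbit is hyperbolic (assumption (2)), it cannot lie on an invariant circle of periodic minimal orbits, so by the criterion for differentiability of Mather's $\beta$ at a rational \cite{Mather90} the function $\beta$ has a corner at $p/q$ and, by the structure theorem, $\mathcal M^+_{p/q}\neq\emptyset$; moreover every element of $\mathcal M^+_{p/q}$ is, after a suitable integer translation, asymptotic to $\underline x$ as $i\to-\infty$ and to $\underline x+1$ as $i\to+\infty$. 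Since $\mathcal M^+_{p/q}$ is invariant under integer translations, it suffices to show that $\mathcal M^+_{p/q}(\underline x,\underline x+1)$ consists of a single orbit.

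Next I would record the variational facts. Any two elements $\underline y,\underline z\in\mathcal M^+_{p/q}(\underline x,\underline x+1)$ cannot cross: otherwise the configurations obtained as the pointwise minimum and maximum of $\underline y$ and $\underline z$ have the same asymptotics and total action strictly smaller than that of $\underline y$ together with $\underline z$ (strict monotonicity of the twist, i.e. $\partial_{12}\ell\neq0$), contradicting minimality. The same comparison applied to $\underline y$ and the translates $\underline x,\underline x+1$ forces $x_i<y_i<x_i+1$ for every $i\in\Z$. Hence $\mathcal M^+_{p/q}(\underline x,\underline x+1)$ is totally ordered, and it is order-convex inside $\mathcal M_{p/q}$: any $\underline w\in\mathcal M_{p/q}$ that lies (in the order) between two of its elements is squeezed into the strip and inherits their asymptotics, so $\underline w\in\mathcal M^+_{p/q}(\underline x,\underline x+1)$.

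Finally, I would bring in hyperbolicity and transversality. Viewing the minimal periodic orbit as a hyperbolic fixed point $X_0$ of the $q$-th return map $T$, the two one-dimensional invariant manifolds $W^u(X_0)$ and $W^s(X_0+(1,0))=W^s(X_0)+(1,0)$ intersect (any element of $\mathcal M^+_{p/q}(\underline x,\underline x+1)$ provides an intersection point), and by the lift of assumption (3) the intersection is transverse, hence discrete; the elements of $\mathcal M^+_{p/q}(\underline x,\underline x+1)$ are precisely the minimizing $T$-orbits contained in $W^u(X_0)\cap W^s(X_0+(1,0))$. Confinement to the strip means the backward, resp. forward, tail of such a minimizer remains on the branch of $W^u(X_0)$, resp. of $W^s(X_0+(1,0))$, that leaves the fixed point into the strip, so a minimizer is a monotone, ``direct'' connection. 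It then remains to exclude a second one; this is the heart of the matter and is essentially contained in \cite{MatherForni} under the present non-degeneracy hypotheses: one shows that the heteroclinic Peierls barrier of rotation number $p/q$, restricted to the gap between consecutive zeros, attains its minimum at a single point, equivalently that two distinct ordered minimizers inside the strip are incompatible with transversality of $W^u(X_0)$ and $W^s(X_0+(1,0))$ (the ``innermost'' transverse connection being the unique action minimizer). Putting the three steps together, $\mathcal M^+_{p/q}(\underline x,\underline x+1)$ is a single orbit, which proves the Proposition.

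The genuinely delicate point is this last step: no-crossing and order-convexity by themselves only confine $\mathcal M^+_{p/q}$ to the discrete set of transverse heteroclinic orbits, and ruling out a second minimizer requires comparing the regularized actions of the competing connections. An alternative, more self-contained route I would consider is to construct the minimizer directly as the orbit through the point where the heteroclinic Peierls barrier attains its minimum on the relevant gap, and then to exploit the $C^{1}$-regularity of the one-sided barriers --- the weak KAM solutions associated with $W^u(X_0)$ and $W^s(X_0+(1,0))$ --- together with the transversality of these manifolds, to show that this minimum is unique; this also dovetails with the analysis carried out in Section \ref{sec5}.
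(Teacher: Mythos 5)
The paper offers no proof of this proposition at all: it simply declares it to be ``a well known fact due to Aubry--Mather theory'' and cites Mather--Forni. So there is nothing in the paper against which to line up your argument step by step. Your outline is the standard one an expert would give --- reduce via the structure theorem to showing $\mathcal M^+_{p/q}(\underline x,\underline x+1)$ is a singleton, confine everything to the strip by no-crossing and the graph property, identify these minimizers with transverse heteroclinic intersections of $W^u$ and $W^s$ --- and in that sense it is consistent with what the authors have in mind when they cite \cite{MatherForni}.

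However, the last step of your argument, which you yourself flag as ``the heart of the matter,'' is not actually established, and the heuristic you offer for it is overstated. You assert that two distinct ordered minimizers inside the strip are ``incompatible with transversality of $W^u(X_0)$ and $W^s(X_0+(1,0))$.'' Transversality makes the intersection set discrete and hence makes the critical points of the gap Peierls barrier isolated, but it does not by itself rule out two isolated transverse heteroclinics having equal (minimal) barrier value and therefore both lying in $\mathcal M^+_{p/q}$. Ruling this out requires either an additional non-degeneracy condition (distinct barrier values among the finitely many primary transverse connections, which is a genericity statement not literally among Assumptions (1)--(3)) or an argument of an entirely different kind; ``the innermost transverse connection is the unique action minimizer'' is a conclusion, not a proof. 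Since the paper itself defers the whole proposition to the literature, you are in good company in not closing this gap, but you should not present transversality as though it directly forces uniqueness: as written, that sentence is the one place where a referee would push back.
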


{Observe that in Proposition \ref{prop1}, the unique orbit in $\mathcal{M}^{+}_{{p}/{q}}$ connects the unique Aubry-Mather periodic orbit of rotation number $p/q$ to one of its shifts.\\
}

Let   $\tau\geqslant2$ and  denote $\mathcal{E}^{\tau}$ the set of all the strictly convex $C^{\tau+1}$-billiard tables, for which the corresponding billiard maps satisfy Assumptions 
{in Section \ref{sec4}}. {The set $\mathcal{E}^{\tau}$ is a residual subset  
of the space formed by strictly convex $C^{\tau+1}$-domains, with $C^{\tau+1}$-topology.  
See e.g. \cite{CKP2007}.}\\

Hereafter, we fix $\Omega\in\mathcal{E}^{\tau}$ and $f: (s,r)\to(s',r)$ is the associated billiard map.
Without further specification, all of our discussions are about the billiard map $f$.\\

\section{Approximation of the Barrier}\label{sec5}
In this section, we will prove statement (1) in  Main Theorem.

For  $\frac{p}{q}\in\mathbb{Q}\cap(0,\frac{1}{2}]$ in lowest term, 
let $$X_{p/q}:x_0,\dots,x_{q-1},$$ be the minimal periodic orbit with rotation number $\frac{p}{q}$ and let $L_{p,q}$ be its perimeter.  \\
Denote  by $L_{Np,Nq-1}$ be the perimeter of the minimal periodic orbit with rotation number $\frac{Np}{Nq-1}$. 
Then:

\begin{proposition}\label{prop2}
$$\lim_{N\to+\infty}L_{Np,Nq-1}-N\cdot L_{p,q}=-p/q\beta'_+(p/q)+\beta(p/q).$$ 
where $\beta(\cdot)$ is the minimal averaged action of the billiard map $f$ (introduced in  Definition \ref{defbeta}),  and $\beta'_+(\cdot)$ is its one-side derivative. \\
\end{proposition}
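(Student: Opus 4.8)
The plan is to express both perimeters in terms of Mather's $\beta$-function and then exploit its convexity. Recall from \eqref{betaandMLS} that for the minimal periodic orbit $X_{p/q}$ we have $L_{p,q} = \mathcal{ML}^{\rm max}_\Om(p/q) = -q\,\beta(p/q)$, and likewise $L_{Np,Nq-1} = -(Nq-1)\,\beta\!\left(\tfrac{Np}{Nq-1}\right)$. Hence
\[
L_{Np,Nq-1} - N\cdot L_{p,q} = -(Nq-1)\,\beta\!\left(\frac{Np}{Nq-1}\right) + Nq\,\beta\!\left(\frac pq\right).
\]
So the whole statement reduces to a one-variable asymptotic analysis of $\beta$ along the sequence $\omega_N := \frac{Np}{Nq-1}$, which converges to $p/q$ from above (since $\frac{Np}{Nq-1} > \frac{Np}{Nq} = \frac pq$), with $\omega_N - \frac pq = \frac{p}{q(Nq-1)} \sim \frac{p}{q^2 N}$.

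First I would write $\beta(\omega_N) = \beta(p/q) + \beta'_+(p/q)(\omega_N - p/q) + o(\omega_N - p/q)$, using that $\beta$ is convex and hence admits a one-sided derivative $\beta'_+(p/q)$, with the error controlled by the right-continuity of $\beta'_+$. Substituting and collecting terms:
\[
L_{Np,Nq-1} - N L_{p,q} = \beta(p/q) - (Nq-1)\,\beta'_+(p/q)\,(\omega_N - p/q) + (Nq-1)\,o(\omega_N-p/q).
\]
Since $(Nq-1)(\omega_N - p/q) = (Nq-1)\cdot\frac{p}{q(Nq-1)} = \frac pq$ exactly, the middle term is precisely $-\frac pq\,\beta'_+(p/q)$ for every $N$, and the error term $(Nq-1)\,o(\omega_N - p/q) = \frac pq\cdot\frac{o(\omega_N-p/q)}{\omega_N - p/q} \to 0$. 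This yields the claimed limit $-\frac pq\,\beta'_+(p/q) + \beta(p/q)$.

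The main technical obstacle is justifying the first-order Taylor expansion of $\beta$ with a genuine $o(\cdot)$ remainder at a rational point, where $\beta$ need not be differentiable (by property (iii) above, differentiability at $p/q$ is equivalent to the existence of an invariant circle of minimal orbits of that rotation number, which we do not assume — indeed Assumption (2) forces a hyperbolic periodic orbit, so $\beta$ typically has a corner there). What saves us is that we only approach $p/q$ from one side: a convex function has a well-defined right derivative $\beta'_+$ at every point, and $\lim_{\omega \downarrow p/q}\frac{\beta(\omega) - \beta(p/q)}{\omega - p/q} = \beta'_+(p/q)$ by definition of the right derivative together with monotonicity of difference quotients. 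This is exactly the statement that $\beta(\omega) - \beta(p/q) - \beta'_+(p/q)(\omega - p/q) = o(\omega - p/q)$ as $\omega \downarrow p/q$, which is all the argument needs. One should also double-check the edge case $p/q = 1/2$: there $\omega_N = \frac{N}{2N-1} \in (1/2, 1)$, but by the orientation-reversal symmetry $\beta(\omega) = \beta(1-\omega)$ (equivalently $\mathcal{ML}^{\rm max}$ is symmetric), so the one-sided analysis transfers to approaching $1/2$ from below on the reflected side, and the formula persists with $\beta'_+$ interpreted appropriately; alternatively one notes $\frac{Np}{Nq-1}$ still has the required denominator relation and the exact cancellation $(Nq-1)(\omega_N - p/q) = p/q$ holds regardless. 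Finally, I would remark that the identification of the limit $-\frac pq\beta'_+(p/q) + \beta(p/q)$ with $-B_{p/q}$, the minimum of Peierls' barrier, is the content of the subsequent section (via the standard relation between Peierls' barrier, minimal orbits asymptotic to the periodic one, and the Legendre-type quantity $\omega\beta'_+(\omega) - \beta(\omega)$), so Proposition \ref{prop2} as stated is purely the $\beta$-function computation above.
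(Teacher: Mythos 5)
Your proof is correct and takes essentially the same approach as the paper: rewrite both perimeters via $L_{p,q}=-q\beta(p/q)$ and $L_{Np,Nq-1}=-(Nq-1)\beta\!\left(\tfrac{Np}{Nq-1}\right)$, observe the exact cancellation $(Nq-1)\bigl(\tfrac{Np}{Nq-1}-\tfrac pq\bigr)=\tfrac pq$, and pass to the limit using convexity to identify the difference quotient with $\beta'_+(p/q)$. Your Taylor-with-$o(\cdot)$-remainder phrasing is just a repackaging of the paper's difference-quotient step; the extra remark on the $p/q=1/2$ edge case is a small bonus the paper does not spell out.
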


\begin{proof} {Recall relation \eqref{betaandMLS}}. Since $L_{p,q}=-q\beta(p/q)$ and $L_{Np,Nq-1}=-(Nq-1)\beta(\frac{Np}{Nq-1})$,
Then
\[\begin{split}L_{Np,Nq-1}-NL_{p,q}&=-[(Nq-1)\beta(\frac{Np}{Nq-1})-Nq\beta(p/q)]\\
&=-(Nq-1)(\beta(\frac{Np}{Nq-1})-\beta(p/q))+\beta(p/q)\\
&=-p/q\frac{\beta(\frac{Np}{Nq-1})-\beta(p/q)}{\frac{Np}{Nq-1}-\frac{p}{q}}+\beta(p/q)\\
&\longrightarrow \; -p/q\beta'_+(p/q)+\beta(p/q) \quad as\quad N\to+\infty.
\end{split}\]In the last equality, we have use the convexity  of the minimal averaged action $\beta(\cdot)$.
This proves the assertion of the lemma.
\end{proof}

\vspace{10 pt}

Let now $$X_{p/q+}: \dots,z_{-1},z_0,z_{1},\dots,$$ be the  minimal orbit in $\M^+_{\frac{p}{q}}$, and 
\begin{equation}\label{p/q+z0}d(f^{Nq}(z_0),f^{Nq}(x_1))\to 0,\quad d(f^{-Nq}(z_0),f^{-Nq}(x_0))\to 0,\quad as\quad N\to+\infty,\end{equation}
where $d(\cdot,\cdot)$ is the standard Euclidean distance in $\mathbb{R}^2$.

With slight abuse of notation, we will also use the same notation to denote the \mbox{$s$-coordinates} of the points in the orbits when they  are considered as  variables of the generating function $h(s,s')=-\ell(s,s')$. 
It follows from Aubry-Mather theory that  $X_{p/q+}$ minimizes 
\[\begin{split}B_{p/q}(z_0')&=\lim_{M,\;K\to+\infty}\sum _{i=-Kq+1}^{Mq-1}h\big(z_i',z_{i+1}'\big)-h(x_i,x_{i+1}) -h(x_{-Kq},x_{-Kq+1})\\
&=\lim_{M,\;K\to+\infty}\sum _{i=-Kq+1}^{Mq-1}h\big(z_i,z_{i+1}'\big)+(M+K)L_{p,q},\end{split}\]
 among all the configurations $\dots,z_{-1}',z_0',z_1',\dots$ such that (as $N\to+\infty$)
\begin{equation}\label{configuration}d(z_{-Nq+i}',x_i)\to0, \quad d(z_{Nq+i},x_{1+i})\to0,\quad i=0,\dots, q-1.  \end{equation}
\vspace{5 pt}

The function $B_{p/q}(\cdot)$ is usually referred as the {\it Peierls' Barrier function}.
Since the  periodic orbit  $X_{p/q}$ is hyperbolic, we have that  $B_{p/q}(z_0)$ is finite. \\

\begin{proposition}\label{prop:barrier}
$$\lim_{N\to+\infty}L_{Np,Nq-1}-NL_{p,q} = -B_{p/q}(z_0).\\$$
 \end{proposition}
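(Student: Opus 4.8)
The plan is to combine Proposition~\ref{prop2} with the explicit expression for the Peierls' barrier value $B_{p/q}(z_0)$ written above, and to show that the two limits coincide. First I would observe that Proposition~\ref{prop2} already identifies $\lim_{N\to+\infty}\big(L_{Np,Nq-1}-N L_{p,q}\big) = -\tfrac{p}{q}\beta'_+(p/q)+\beta(p/q)$, so the entire content of Proposition~\ref{prop:barrier} reduces to the identity
\begin{equation*}
B_{p/q}(z_0) \;=\; \frac{p}{q}\,\beta'_+(p/q) - \beta(p/q).
\end{equation*}
This is a purely variational statement relating the Peierls' barrier to the one-sided derivative of Mather's $\beta$-function, and it is essentially classical Aubry--Mather theory; the task is to reconstruct it from the definitions given in the excerpt.

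Second, I would unwind the definition of $B_{p/q}(z_0)$ as the limit of $\sum_{i=-Kq+1}^{Mq-1}\big(h(z_i,z_{i+1})+L_{p,q}/q\big)$-type sums (more precisely $\sum h(z_i,z_{i+1}) + (M+K)L_{p,q}$), and recognize this as the renormalized action of the heteroclinic minimizer $X_{p/q+}$ relative to the periodic minimizer $X_{p/q}$. The key device is to approximate $X_{p/q+}$ by the periodic orbits $X_{Np/(Nq-1)}$: a configuration of rotation number $Np/(Nq-1)$ and period $Nq-1$ can be viewed as ``one $q$-block short'' of $N$ full periods of $X_{p/q}$, and as $N\to\infty$ the minimal such orbit shadows the periodic orbit for most of the time and makes an ``excursion'' governed by the stable/unstable manifolds — this is exactly the hyperbolicity and transversality content of Proposition~\ref{prop1} and the Assumptions. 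I would then compare the action $L_{Np,Nq-1} = -(Nq-1)\beta\big(\tfrac{Np}{Nq-1}\big)$ of that orbit with the action of a reference configuration built by gluing $N$ copies of the periodic segment and inserting the heteroclinic excursion: the difference of these two actions converges, on one hand, to $L_{Np,Nq-1} - N L_{p,q}$ and on the other hand, by a telescoping/shadowing argument together with the exponential closeness coming from hyperbolicity, to $-B_{p/q}(z_0)$. Alternatively, and perhaps more cleanly, one can bypass the geometry by a direct convexity computation: by strict convexity of $\beta$, $\beta'_+(p/q)$ is the right slope at $p/q$, and one identifies $\tfrac{p}{q}\beta'_+(p/q)-\beta(p/q)$ with $-\alpha(\beta'_+(p/q))$ where $\alpha$ is the Mather $\alpha$-function, while $B_{p/q}(z_0)$ is known to equal this same Legendre-type quantity because the $\M^+_{p/q}$ minimizer carries the difference between the $c=\beta'_+(p/q)$ minimal action and the periodic minimal action.

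Third, I would make the shadowing estimate precise: fix the heteroclinic orbit $X_{p/q+}=\{z_i\}$ normalized as in \eqref{p/q+z0}, and for each $N$ consider the finite configuration obtained by following $X_{p/q}$ backward from $x_0$ for $Kq$ steps, switching to the heteroclinic $\{z_i\}$ near the middle, and following $X_{p/q}$ forward from $x_1$ (i.e.\ its shifted copy) for $Mq$ steps; by hyperbolicity the action of this configuration differs from $(M+K)(-L_{p,q}) + (\text{truncated } B_{p/q}\text{-sum})$ by a term that is $O(\lambda^{-\min(M,K)q})$ with $\lambda=\lambda_{p/q}>1$. Taking $N=M+K$ and letting $M,K\to\infty$, the minimality of $X_{Np,Nq-1}$ (Aubry--Mather) gives $L_{Np,Nq-1} \le N L_{p,q} - B^{(M,K)}_{p/q}(z_0) + O(\lambda^{-cN})$, and the reverse inequality follows because the near-minimal closed configuration of rotation symbol $Np/(Nq-1)$ must itself shadow $X_{p/q}$ (again by uniqueness and hyperbolicity of the periodic minimizer), so its action is bounded below by the same quantity up to exponentially small error. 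Passing to the limit yields $\lim_N (L_{Np,Nq-1}-N L_{p,q}) = -B_{p/q}(z_0)$, which with Proposition~\ref{prop2} also re-proves the identity $B_{p/q}(z_0)=\tfrac{p}{q}\beta'_+(p/q)-\beta(p/q)$.

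\textbf{Main obstacle.} The delicate point is the two-sided shadowing/action estimate: one must show that the minimal periodic orbit of rotation number $Np/(Nq-1)$ really does converge (in the sense of \eqref{configuration}) to the concatenation of the $p/q$ periodic orbit with the $\M^+_{p/q}$ heteroclinic connection, and that the action defect converges to $B_{p/q}(z_0)$ with no loss. This requires the uniqueness in $\mathcal{M}^{\rm per}_{p/q}$ and in $\M^+_{p/q}$ (Proposition~\ref{prop1}), the hyperbolicity (to guarantee $B_{p/q}$ is finite and the excursion is exponentially localized), and a Lipschitz-graph/ordering argument from Aubry--Mather theory to control where the excursion can occur. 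Handling the truncation errors uniformly in $N$ — i.e.\ interchanging the $M,K\to\infty$ limit defining $B_{p/q}$ with the $N\to\infty$ limit — is where the real work lies; everything else is bookkeeping with the relation \eqref{betaandMLS} and convexity of $\beta$.
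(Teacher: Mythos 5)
Your third paragraph is essentially the paper's proof: a two-sided comparison. To get $-(L_{Np,Nq-1}-NL_{p,q})\geqslant B_{p/q}(z_0)-C\epsilon$, one inserts the finite orbit $X_{Np,Nq-1}$ (indexed so that its starting point is $\epsilon$-close to $x_0$, with $K,M$ in a middle range like $N/3<M<2N/3$) into the doubly infinite sequence $\dots,x_{-2},x_{-1},X_{Np,Nq-1},x_1,x_2,\dots$, which is an admissible competitor for the infimum defining $B_{p/q}(z_0)$; minimality of $X_{p/q+}$ gives the inequality. For the reverse inequality, one closes up the truncated heteroclinic $z_{-Kq},\dots,z_{Mq-2},z_{-Kq}$ into a periodic configuration of rotation number $Np/(Nq-1)$ and invokes minimality of $X_{Np,Nq-1}$. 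Letting $\epsilon\to 0$ finishes. Your proposal reaches this, and your preliminary reduction to the identity $B_{p/q}(z_0)=\tfrac{p}{q}\beta'_+(p/q)-\beta(p/q)$ via Proposition~\ref{prop2} is a correct (if optional) reformulation that the paper itself exploits only later, in the statement of Theorem~\ref{theorem1bis}.

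Two small remarks. First, the paper's argument is actually lighter than your sketch: it does not need the exponential localisation $O(\lambda^{-cN})$ coming from hyperbolicity here — only that $B_{p/q}(z_0)$ is finite (which hyperbolicity guarantees) and that the truncation/gluing errors are $O(\epsilon)$. The exponential rates are reserved for Theorem~\ref{lemma-apr} and assertion (2). Second, the ``cleaner'' detour through $\alpha$ and Legendre duality that you float midway is not really a shortcut given the definitions at hand: identifying $B_{p/q}(z_0)$ with $-\alpha(\beta'_+(p/q))+\text{(const)}$ still requires the same comparison between periodic and heteroclinic minimizers, so it does not bypass the shadowing step — it repackages it. I would commit to the direct two-sided estimate, as you do in your third paragraph and as the paper does.
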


{ \begin{remark}
 This result proves assertion (1) in Main Theorem.\\
 \end{remark}
 }

\begin{proof}For any $\epsilon>0$ and large enough $N\in\mathbb{N}$, $N/3<M<2N/3$, $K=N-M$, let $$X_{Np,Nq-1}:x'_{-Kq},\dots,x_0',\dots,x_{Mq-2}'$$ be the minimal periodic orbit with rotation number $\frac{Np}{Nq-1}$ and $d(x_{-Kq},x_0)<\epsilon$.
Then, clearly the configuration $$\dots x_{-2},x_{-1}X_{Np,Nq-1}x_1,x_2\dots$$satisfies \eqref{configuration}.
Therefore, by the minimality of the orbit $X_{p/q+}$, we have 
$$-(L_{Np,Nq-1}-NL_{p,q})\geqslant B_{p/q}(z_0)-C\epsilon,$$ 
where $C$ is a constant that depends only on the billiard map $f$.

On the other hand, the configuration $ z_{-Kq},\dots, z_0,\dots, z_{Mq-2},z_{-Kq}$ is of rotation number  $\frac{Np}{Nq-1}$; hence
$$-L_{Np,Nq-1}+NL_{p,q}\leqslant B_{p/q}(z_0)+C\epsilon.$$
Therefore, the assertion of the proposition follows.\\
\end{proof}

{Using Proposition \ref{prop2}, Proposition \ref{prop:barrier} and relation \eqref{betaandMLS}, observe that 
item (2) in Main Theorem} can be rephrased in terms of Mather's $\beta$-function in 
the following way. \\

\begin{theorem} \label{theorem1bis}
For a generic strictly convex $C^{\tau+1}$-billiard table $\Omega$ ($\tau\geqslant2$), we have that for each $p/q\in\mathbb{Q}\cap(0,1/2]$ in lowest terms:
$$\lim_{N\to+\infty}\frac{1}{N}\log\Big| (Nq-1) \beta (\frac{Np}{Nq-1})-Nq \beta(\frac{p}{q}){-}B_{p/q}\Big|=\log\lambda_{p/q},$$
where $\lambda_{p/q}$ is the eigenvalue of the linearization of the Poincare return map at the Aubry-Mather periodic orbit with rotation number $\frac{p}{q}$ and 
$B_{p/q}=p/q\beta'_+(p/q) - \beta(p/q)$. \\
\end{theorem}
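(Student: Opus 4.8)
The plan is to reduce Theorem \ref{theorem1bis} to an asymptotic analysis of the Peierls' barrier via a normal form near the hyperbolic Aubry--Mather orbit $X_{p/q}$. Combining Proposition \ref{prop2} and Proposition \ref{prop:barrier} with \eqref{betaandMLS}, the quantity inside the logarithm equals $-(L_{Np,Nq-1}-NL_{p,q}) - B_{p/q} = B_{p/q}(z_0) - B_{p/q}$, so it suffices to show that the $N$-th periodic minimizer $X_{Np,Nq-1}$ realizes a value of the (truncated) barrier action that converges to its infimum $B_{p/q}$ at the exponential rate $\lambda_{p/q}^{-N}$. First I would set up coordinates: work in the universal cover with generating function $h=-\ell$, pass to the $q$-th power $F:=f^q$, which near $X_{p/q}$ has a hyperbolic fixed point (at the point $x_0$ of the orbit) with eigenvalues $\lambda_{p/q}^{\pm1}$, $\lambda_{p/q}>1$. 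The heteroclinic orbit $X_{p/q+}$ of Proposition \ref{prop1} lies on the unstable manifold $W^u$ of one lift of the fixed point and the stable manifold $W^s$ of the adjacent lift, meeting transversally by Assumption (3).

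The core step is a \emph{normal form for Peierls' barrier along the heteroclinic}. I would parametrize points on $W^u$ by an unstable coordinate $\xi$ (so that $F$ acts as $\xi\mapsto \lambda_{p/q}\xi + O(\xi^2)$) and points on $W^s$ by a stable coordinate $\eta$. The barrier $B_{p/q}(z_0')$, as a function of a point $z_0'$ free to vary subject only to the asymptotic constraints \eqref{configuration}, can be re-expressed as a function of where the variational ``broken'' configuration crosses between a neighborhood of the orbit and its shift; using the standard fact that the action of a minimizing segment from a point near $W^u$ to a point near $W^s$ has a non-degenerate critical point at the true heteroclinic, one gets
$$
B_{p/q}(z') = B_{p/q} + \tfrac{1}{2}a\,\big(\xi(z')\,\text{-distance to }W^u\big)^2 + \text{h.o.t.} + (\text{stable analogue}),
$$
with $a\neq 0$ precisely because the intersection $W^u\pitchfork W^s$ is transverse (this is Lemma \ref{generic-lemma}'s non-degeneracy). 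The second step is to locate $X_{Np,Nq-1}$: the periodic orbit of rotation number $Np/(Nq-1)$ is forced by its rotation symbol to shadow $X_{p/q}$ for $\approx N$ iterates of $F$, then make one ``extra'' turn, so after going roughly $M$ steps forward and $K=N-M$ steps backward from its central point it lands within $O(\lambda_{p/q}^{-\min(M,K)})$ of $W^u$ (resp. $W^s$) — this is the $\lambda$-lemma / inclination-lemma estimate, or equivalently the standard exponential shadowing estimate for hyperbolic sets. Feeding this $O(\lambda_{p/q}^{-N})$ deviation into the quadratic normal form gives $B_{p/q}(z_0)^{(N)} - B_{p/q} = O(\lambda_{p/q}^{-2N})$ as an upper bound and, using the transversality (non-vanishing $a$) together with a \emph{lower} bound on the deviation (the periodic orbit cannot hug $W^u\cap W^s$ any closer than the hyperbolic splitting allows), a matching lower bound of the same exponential order. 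Taking $\tfrac1N\log$ then yields $-2\log\lambda_{p/q}$ — so I would double-check the bookkeeping: the exponent the theorem asks for is $\log\lambda_{p/q}$, which suggests the correct statement pairs one-sided ($W^u$ only, or $W^s$ only) deviations of order $\lambda_{p/q}^{-N}$ linearly, not quadratically; I would resolve this by writing the barrier increment as a product of the distance-to-$W^u$ and distance-to-$W^s$ contributions coming from the two ends, each $O(\lambda_{p/q}^{-N/2})$-ish with the $M,K$ split optimized, or — more cleanly — by noting $B_{p/q}(z')-B_{p/q}$ is to first order \emph{linear} in the (signed) gap between the broken configuration's forward and backward pieces, and that gap is exactly the $W^u$–to–$W^s$ separation produced by the extra rotation, which decays like $\lambda_{p/q}^{-N}$.

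Concretely, the steps in order: (i) pass to $F=f^q$, record the hyperbolic fixed point and eigenvalue $\lambda_{p/q}$; (ii) state and prove the normal form for $B_{p/q}(\cdot)$ near the heteroclinic, with the non-degeneracy constant controlled by the transversality of $W^u\pitchfork W^s$ (Lemma \ref{generic-lemma}); (iii) show $X_{Np,Nq-1}$ shadows $X_{p/q}$ and its shift, so its central point sits at distance $\Theta(\lambda_{p/q}^{-N})$ from the appropriate invariant manifolds — upper bound by the $\lambda$-lemma, lower bound by non-triviality of the hyperbolic splitting; (iv) evaluate the normal form at this point to get $B_{p/q}(z_0)^{(N)}-B_{p/q}=\Theta(\lambda_{p/q}^{-N})$; (v) combine with Propositions \ref{prop2}–\ref{prop:barrier} and \eqref{betaandMLS} and take $\tfrac1N\log$.

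\textbf{Main obstacle.} The delicate point is step (ii)–(iv): turning the transversality hypothesis into a \emph{quantitative, non-degenerate} expansion of Peierls' barrier and then matching the \emph{lower} bound. The upper bound (shadowing $\Rightarrow$ small barrier increment) is soft, but for the lower bound one must rule out accidental near-degeneracies — i.e., one must know the relevant second variation of the action transverse to the heteroclinic is uniformly bounded away from zero (this is exactly where the generic non-degeneracy Lemma \ref{generic-lemma} is needed) and that the approximating periodic orbits do not converge to the heteroclinic faster than $\lambda_{p/q}^{-N}$. Controlling higher-order terms in the normal form uniformly over the (growing) number of iterates, and making sure the constants in ``$\Theta(\lambda_{p/q}^{-N})$'' do not themselves grow with $N$, is the technical heart of the argument; I expect this to consume the bulk of Section \ref{sec6}.
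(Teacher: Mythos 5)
The broad outline you propose — pass to $f^q$, use a $C^{1,1/2}$ Sternberg-type normal form at the hyperbolic $p/q$-orbit, exploit that the minimizer $X_{p/q+}$ is a transverse homoclinic/heteroclinic, show that $X_{Np,Nq-1}$ shadows the $p/q$-orbit for $\approx N$ iterates and thereby compute the leading correction in $\lambda_{p/q}^N$, then divide by $N$ and take $\log$ — is exactly the paper's strategy (this is the content of Theorem~\ref{lemma-apr} plus Lemmas~\ref{log-limit}--\ref{generic-lemma}). But there is one genuine gap in your argument, and it sits precisely at what you call ``the technical heart'': your claim that the non-degenerate coefficient $a\neq0$ holds ``\emph{precisely because} the intersection $W^u\pitchfork W^s$ is transverse (this is Lemma~\ref{generic-lemma}'s non-degeneracy).'' This is false, and the paper does not claim it. Transversality (Assumption (3)) only gives nonvanishing of the $(1,1)$-entry $a$ of the global map $R_\theta\Phi^{-1}f^{(n_0+m_0)q-1}\Phi R_{-\theta}$ in linearizing coordinates, which is what makes the periodicity system solvable with $\delta$'s of size $\Theta(\lambda^n)$. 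The leading coefficient in the normal form, however, is the explicit quantity from~\eqref{evenC},
\[
C_{p,q}=\Big(\tfrac{2C_{q+}\eta^2}{1-\lambda^2}+\tfrac{2C_{q-}\lambda^2\xi^2}{1-\lambda^2}-\xi^2\sin\theta\cos\theta-\eta^2\sin\theta\cos\theta-\xi\eta\Big)\lambda^{2(m_0+n_0)},
\]
an indefinite quadratic form in the homoclinic coordinates $(\xi,\eta)$ whose vanishing is a codimension-one condition completely independent of transversality. The paper must therefore prove a \emph{separate} genericity statement, Lemma~\ref{generic-lemma}, and the mechanism (Lemma~\ref{lemma-parametrized}) is a carefully designed two-parameter billiard deformation supported away from $X_{p/q}$ that moves $\xi$ and $\eta$ independently while freezing $\lambda$ and $\theta$, which then pushes $C_{p,q}$ off zero. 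Your proposal has no analogue of this step: you treat the non-degeneracy as automatic, which would leave the lower bound of step (iv) unproved on a dense set of tables.

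Two smaller but real discrepancies. First, your sign convention $\lambda_{p/q}>1$ is inconsistent with the theorem: the quantity inside the log tends to $0$, so $\tfrac1N\log|\cdot|\to\log\lambda_{p/q}$ forces $\lambda_{p/q}\in(0,1)$ (the contracting eigenvalue), which is the paper's convention. Second, your exponent bookkeeping never settles: you first obtain $-2\log\lambda_{p/q}$ from a ``second variation at a non-degenerate minimum'' picture and then waver between a linear-gap fix and a product-of-deviations fix. The paper resolves this cleanly and differently: the contributions are genuinely quadratic in the deviations from the homoclinic, but by splitting the $n$ intermediate blocks at the midpoint ($n_1\approx n_2\approx n/2$), the stable-side deviations are $\Theta(\lambda^{n_1})$ and the unstable-side ones $\Theta(\lambda^{n_2})$, so all of $\lambda^{2n_1}$, $\lambda^{2n_2}$, and the cross term $\xi\eta\lambda^n$ are $\Theta(\lambda^N)$. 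In particular there is no ``non-degenerate minimum'' picture here; $C_{p,q}$ is indefinite and the even/odd distinction giving two constants $C_{p,q}\neq C'_{p,q}$ is a real feature you do not account for.
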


\section{Eigenvalues of the Aubry-Mather periodic orbits}\label{sec6}
In this section, we continue to prove assertion  (2) of Main Theorem.\\

Let $\Lambda_{p/q}=Df^q(x_1).$
Since $X_{p/q}$ is hyperbolic, $\Lambda_{p/q}$ is hyperbolic, {\it i.e.}, it has two distinguished eigenvalues $0<\lambda_{\frac{p}{q}}<1$ and $\lambda_{\frac{p}{q}}^{-1}>1$.
{One of the main results of this section is the following theorem, which can be interpreted as a sort of normal form statement for Peierls' barrier.}

{\begin{theorem}\label{lemma-apr} There exists $N_{p,q}>0$, $C_{p,q}\in\mathbb{R}$ and $C_{p,q}'\in\mathbb{R}$ such that, if $N>N_{p,q}$,  there exists a periodic orbit $X_{Np,Nq-1}$ with minimal period $Nq-1$,  rotation number $Np/(Nq-1)$ and  perimeter $L_{Np, Nq-1}'$ satisfying, 
$$L_{Np,Nq-1}'-N\cdot L_{p,q}=-B_{p/q}(z_0)+C_{p,q}\lambda_{\frac{p}{q}}^N+\mathcal{O}(\lambda_{\frac{p}{q}}^{9N/8}),\quad \text{if $N$ is even},$$
and 
$$L_{Np,Nq-1}'-N\cdot L_{p,q}=-B_{p/q}(z_0)+C_{p,q}'\lambda_{\frac{p}{q}}^N+\mathcal{O}(\lambda_{\frac{p}{q}}^{9N/8}),\quad \text{if $N$ is odd}.$$
Moreover $d(z_0,X_{Np,Nq-1})=\mathcal{O}(\lambda_{\frac{p}{q}}^N)$.\\
\end{theorem}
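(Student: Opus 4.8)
The plan is to build $X_{Np,Nq-1}$ directly, by ``gluing the homoclinic orbit $X_{p/q+}$ to itself'': for $N$ large it should shadow a long central arc $z_{-Kq+1},\dots,z_{Mq-1}$ of the homoclinic orbit, with $M+K=N$, and close up by \emph{one} extra step along $X_{p/q}$ --- which is precisely where the ``$-1$'' in the period $Nq-1$ comes from. First I would fix a local model near $X_{p/q}$: since $\Lambda_{p/q}=Df^q(x_1)$ is hyperbolic with eigenvalues $\lambda_{p/q}^{\pm1}$, there are local stable and unstable curves $W^s_k,W^u_k$ through the $x_k$ on which $f^q$ is smoothly conjugate to $\xi\mapsto\lambda_{p/q}^{\pm1}\xi$, and the homoclinic orbit approaches $X_{p/q}$ at the sharp rate $z_{\pm(jq+r)}-x_{\bullet}=c_{\bullet}\lambda_{p/q}^{\,j}\bigl(1+\mathcal O(\lambda_{p/q}^{\eta j})\bigr)$ for some $\eta>0$, with $c_{\bullet}\neq0$ (nonvanishing because $z\notin X_{p/q}$, together with the transversality $W^s\pitchfork W^u$).

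I would then set up the closing-up as a fixed-point problem for the stationarity (Euler--Lagrange) equations of $h$. Take $M=\lceil N/2\rceil$, $K=\lfloor N/2\rfloor$, and let $\hat w$ be the cyclic configuration of length $Nq-1$ with $\hat w_i=z_i$ for $-Kq+1\le i\le Mq-1$. Using that $z$ is a genuine orbit and that both ends of this arc lie $\mathcal O(\lambda_{p/q}^{N/2})$-close to $X_{p/q}$, one checks that $\hat w$ solves the stationarity equations except at the two sites next to the seam, where the residual is $\mathcal O(\lambda_{p/q}^{N/2})$. The linearization of the stationarity map along $X_{p/q}$ is the (tridiagonal) Jacobi operator; hyperbolicity of $X_{p/q}$ gives that the periodic Jacobi operator on $\mathbb Z/(Nq-1)$ is invertible, \emph{uniformly in $N$}, with Green's function decaying like $\lambda_{p/q}^{|i-j|/q}$. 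A contraction mapping (or the implicit function theorem) then yields the genuine periodic orbit $X_{Np,Nq-1}$ near $\hat w$, with $w_i-\hat w_i=\mathcal O\bigl(\lambda_{p/q}^{N/2}\lambda_{p/q}^{\,\mathrm{dist}(i,\text{seam})/q}\bigr)$; its minimal period is $Nq-1$ and its rotation number $Np/(Nq-1)$ since it shadows a genuinely non-periodic arc of winding $Np$. Since $z_0$ lies $\sim N/2$ returns from the seam, this already gives $d(z_0,X_{Np,Nq-1})=\mathcal O(\lambda_{p/q}^{N})$.

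The core is the perimeter expansion. Writing $A'=\sum h(w_i,w_{i+1})$, so $L'_{Np,Nq-1}=-A'$ and $NL_{p,q}$ is $-1$ times the action of $N$ periods of $X_{p/q}$, I would expand $A'+NL_{p,q}-B_{p/q}(z_0)$ into three groups of terms. First, the tails of the reference-subtracted homoclinic sum defining $B_{p/q}(z_0)$ that are cut off at $\sim M$, resp. $\sim K$, returns, each of size $\mathcal O(\lambda_{p/q}^{2M})$, resp. $\mathcal O(\lambda_{p/q}^{2K})$, hence $\mathcal O(\lambda_{p/q}^{N})$. Second, the ``seam'' terms produced by replacing the homoclinic continuation by the one closing step along $X_{p/q}$, together with the bulk correction $w-\hat w$; these are \emph{a priori} $\mathcal O(\lambda_{p/q}^{N/2})$, and the crucial point is that their first-order parts cancel --- this uses that $z_0$ is a \emph{minimum} of Peierls' barrier (so $B'_{p/q}(z_0)=0$), that $X_{Np,Nq-1}$ is itself critical (so correcting it costs only quadratically), and the matching of the momenta $\partial_1h,\partial_2h$ along $X_{p/q}$ --- leaving only $\mathcal O(\lambda_{p/q}^{N})$. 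Third, the quadratic ``turnaround'' excess, genuinely $\asymp\lambda_{p/q}^{N}$, which carries the leading coefficient. Collecting, $A'+NL_{p,q}=B_{p/q}(z_0)+C\lambda_{p/q}^{N}+\mathcal O(\lambda_{p/q}^{9N/8})$. The value of the coefficient depends on whether $M=K$ ($N$ even) or $M=K+1$ ($N$ odd): in the odd case the stable and unstable tails are off by one return, which rescales the coefficient, and this is the origin of the two constants $C_{p,q},C_{p,q}'$. The exponent $9/8$ is a safe, non-optimal choice absorbing the quadratic-in-residual terms, the cubic-in-deviation terms, and the finite-$C^\tau$ losses in the local normal form.

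The step I expect to be the main obstacle is the second group above: organizing the cancellations so that what survives is $\mathcal O(\lambda_{p/q}^{N})$ and not merely $\mathcal O(\lambda_{p/q}^{N/2})$, and carrying the phase shift (the ``$-1$'') consistently through all the index bookkeeping. A secondary technical point is the uniform-in-$N$ invertibility of the periodic Jacobi operator and the uniform exponential Green's-function bound, on which both the fixed-point construction and the estimate on $w-\hat w$ rest.
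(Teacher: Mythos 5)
Your proposal matches the paper's approach closely: linearize near $X_{p/q}$ via the $C^{1,1/2}$ normal form of Lemma \ref{normal-form}, shadow a long arc of the homoclinic orbit to build the closing orbit, split the action difference into a bulk sum, two exponentially decaying tails, and a quadratic ``turnaround'' of order $\lambda^N$ (encoded in the paper by the Jacobi quadratic form $\mathbb{W}(X_{p/q})$ applied to the vectors $\mathcal{Z}_\pm$), with the linear boundary terms telescoping away by the Euler--Lagrange equations \eqref{periodic-null} and \eqref{periodic-null2}, and with the parity of $N$ entering through the $\lambda^{2n_1}$ vs.\ $\lambda^{2n_2}$ mismatch exactly as you describe. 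The one real difference is cosmetic: where you invoke a global contraction/shadowing argument backed by uniform-in-$N$ invertibility of the periodic Jacobi operator with exponentially decaying Green's function, the paper instead solves the closing-up conditions directly in the linearized $(\bar\xi,\bar\eta)$-chart, reading off the endpoint displacements $\delta_A,\delta'_A,\delta_B,\delta'_B$ from hyperbolicity and the transversality $a\neq0$ of the homoclinic intersection, and then transports everything back to $(s,r)$-coordinates.
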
}
{
\brm 
Notice that the ``even'' $C_{p,q}$ can be different from the ``odd'' $C'_{p,q}$
(see \eqref{evenC} and \eqref{oddC} respectively).
\erm}

{We will  show in Lemma \ref{generic-lemma} that,  for a generic billiard table, the above constants $C_{p,q}$ and $C'_{p,q}$ are non-zero: this non-degeneracy property and Theorem \ref{lemma-apr} easily imply the proof of  assertion  (2) in Main Theorem (see  the end of this section).}\\

{In order to prove Theorem \ref{lemma-apr}, let us start by recalling} the following lemma, which is well known, see e.g \cite{stowe1986,zhang2011}.\\

\begin{lemma}\label{normal-form} For any $\epsilon>0$, there exists a $C^{1,\frac{1}{2}}$ diffeomorphism $\Phi: V\to U$, where $U,V$ are neighborhood of $x_1$ such that 
$$\Phi^{-1}\circ f^q\circ \Phi=\Lambda_{p/q}, \quad \|\Phi-{\rm Id}\|_{C^1}\leqslant \epsilon,\quad \text{and}\quad \|\Phi^{-1}-{\rm Id}\|_{C^1}\leqslant \epsilon.$$
Moreover,
$$\Phi(z)-\Phi(z')=z-z'+\mathcal{O}(\max\{|z|^{1/2},|z'|^{1/2}\}|z-z'|).\\$$
\end{lemma}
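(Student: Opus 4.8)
The plan is to prove Lemma~\ref{normal-form} by the classical Hartman-type linearization of the hyperbolic fixed point $x_1$ of $f^q$, in the sharpened (Hölder) form worked out in \cite{stowe1986,zhang2011}. Since only neighborhoods $U,V$ of $x_1$ are required, I would first localize: in coordinates centered at $x_1$ write $f^q(x)=\Lambda_{p/q}\,x+g(x)$ with $g(0)=0$, $Dg(0)=0$ and $g\in C^\tau$ ($\tau\ge2$), fix a bump function $\chi$ that equals $1$ on the unit ball and vanishes outside the ball of radius $2$, and replace $g$ by $g_\delta(x):=\chi(x/\delta)g(x)$. The map $F:=\Lambda_{p/q}+g_\delta$ is then globally defined, coincides with $f^q$ on $\{|x|\le\delta\}$, and satisfies $\|g_\delta\|_{C^1}=\mathcal O(\delta)$, $\|D^2g_\delta\|_{C^0}=\mathcal O(1)$, hence $\|g_\delta\|_{C^{1,1/2}}=\mathcal O(\delta^{1/2})$. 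It therefore suffices to linearize $F$ globally by a diffeomorphism $\Phi$ of $\mathbb R^2$ and then restrict it to a small neighborhood of $x_1$; choosing $\delta$ small will force $\|\Phi-\mathrm{Id}\|_{C^1}\le\epsilon$ and, via the inverse function theorem, $\|\Phi^{-1}-\mathrm{Id}\|_{C^1}\le\epsilon$.

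For the $C^0$ statement I would look for $\Phi=\mathrm{Id}+\psi$, so that $F\circ\Phi=\Phi\circ\Lambda_{p/q}$ becomes the cohomological equation
\[
\mathcal L\psi:=\psi\circ\Lambda_{p/q}-\Lambda_{p/q}\,\psi=g_\delta\circ(\mathrm{Id}+\psi).
\]
Decomposing $\psi$ along the two (real) eigendirections $E_s\oplus E_u$ of $\Lambda_{p/q}$, with multipliers $\lambda_{p/q}$ and $\lambda_{p/q}^{-1}$, the operator $\mathcal L$ is boundedly invertible on $C^0_b(\mathbb R^2,\mathbb R^2)$ by hyperbolicity: its stable component inverts by the geometrically convergent series $\psi_s(z)=\sum_{k\ge0}\lambda_{p/q}^{\,k}\,r_s(\Lambda_{p/q}^{-k-1}z)$, and analogously, summing forward, for the unstable one, so that $\|\mathcal L^{-1}\|$ is controlled in terms of $\lambda_{p/q}$ alone. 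Then $\psi\mapsto\mathcal L^{-1}\big(g_\delta\circ(\mathrm{Id}+\psi)\big)$ is a contraction of a small ball of $C^0_b$ as soon as $\delta$ is small, and its unique fixed point yields a homeomorphism (a diffeomorphism, after the regularity step) $\Phi$ with $\Phi^{-1}\circ F\circ\Phi=\Lambda_{p/q}$.

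The regularity upgrade is the delicate part, and is exactly what \cite{stowe1986,zhang2011} supply. Differentiating the fixed-point relation gives a linear equation for $\eta=D\psi$ whose operator, split into the four blocks $E_a\!\to\!E_b$ ($a,b\in\{s,u\}$), has multipliers $1$ on the two diagonal blocks — so no naive contraction is available there, which is the analytic shadow of the fact that smooth linearization is obstructed (e.g. by the resonance $\lambda_{p/q}=\lambda_{p/q}^{2}\cdot\lambda_{p/q}^{-1}$) — and multipliers $\lambda_{p/q}^{\pm2}$ on the off-diagonal ones. One bypasses the diagonal blocks by first $C^1$-linearizing the one-dimensional restrictions of $F$ to its stable and unstable manifolds (which are $C^\tau$ graphs, flattened onto the coordinate axes) and then closing a fiber-contraction/graph-transform argument for the transverse derivative; carrying this one step further at the level of $\tfrac12$-Hölder difference quotients of $\eta$ stays contractive precisely because the loss $\lambda_{p/q}^{-2}$ coming from an off-diagonal block is compensated by the factor $\lambda_{p/q}^{1/2}$ gained from the Hölder exponent — this is why the exponent $1/2$ is the one that comes out. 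One thus obtains $\Phi\in C^{1,1/2}$ with $\|\Phi-\mathrm{Id}\|_{C^1}=\mathcal O(\delta)$. Finally $D\Phi(x_1)$ commutes with $\Lambda_{p/q}$, hence is diagonal and $\mathcal O(\delta)$-close to $\mathrm{Id}$; replacing $\Phi$ by $\Phi\circ D\Phi(x_1)^{-1}$ — still a conjugacy, since $D\Phi(x_1)^{-1}$ commutes with $\Lambda_{p/q}$ — we may assume $D\Phi(x_1)=\mathrm{Id}$.

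The last assertion then follows: since $D\Phi$ is $\tfrac12$-Hölder and $D\Phi(x_1)=\mathrm{Id}$, for $z,z'\in V$ one writes
\[
\Phi(z)-\Phi(z')-(z-z')=\int_0^1\Big(D\Phi\big(z'+t(z-z')\big)-\mathrm{Id}\Big)(z-z')\,dt,
\]
and along the segment $|w-x_1|\le\max\{|z-x_1|,|z'-x_1|\}$, so $|D\Phi(w)-\mathrm{Id}|=|D\Phi(w)-D\Phi(x_1)|\le[D\Phi]_{C^{1/2}}\max\{|z|^{1/2},|z'|^{1/2}\}$, which gives $\Phi(z)-\Phi(z')=z-z'+\mathcal O(\max\{|z|^{1/2},|z'|^{1/2}\}|z-z'|)$. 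The only real obstacle is the $C^{1,1/2}$ regularity: unlike the $C^0$ statement it is genuinely two-dimensional, cannot be produced by a single contraction because the linearized operator degenerates on the diagonal blocks (equivalently, low-order resonances are present among $\{\lambda_{p/q},\lambda_{p/q}^{-1}\}$), and requires combining the one-dimensional linearization along the invariant manifolds with the careful fiber-contraction estimate carried out in \cite{stowe1986,zhang2011}.
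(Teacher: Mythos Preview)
The paper does not prove this lemma at all: it simply states it as ``well known'' and cites \cite{stowe1986,zhang2011}. Your sketch is therefore strictly more than what the paper offers, and its architecture is sound: localize with a cutoff so that the nonlinear part $g_\delta$ has small $C^{1,1/2}$ norm; solve the conjugacy equation $\psi\circ\Lambda_{p/q}-\Lambda_{p/q}\psi=g_\delta\circ(\mathrm{Id}+\psi)$ in $C^0$ by inverting the transfer operator via geometric series along the eigendirections; then invoke \cite{stowe1986,zhang2011} for the $C^{1,1/2}$ regularity; finally normalize so that $D\Phi(x_1)=\mathrm{Id}$ by post-composing with $D\Phi(x_1)^{-1}$ (which commutes with $\Lambda_{p/q}$), and deduce the ``Moreover'' estimate by writing $\Phi(z)-\Phi(z')-(z-z')$ as the integral of $D\Phi-\mathrm{Id}=D\Phi-D\Phi(x_1)$ along the segment and using the $\tfrac12$-H\"older bound together with $|w|\le\max\{|z|,|z'|\}$ on that segment. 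This last step is correct and makes explicit something the paper leaves implicit.

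One small caution: your heuristic for why the exponent $\tfrac12$ emerges (``the loss $\lambda_{p/q}^{-2}$ from an off-diagonal block is compensated by $\lambda_{p/q}^{1/2}$'') does not literally balance, and the actual mechanism in \cite{stowe1986,zhang2011} is more delicate --- it involves the anisotropic scaling of H\"older seminorms under $\Lambda_{p/q}$ in both directions simultaneously, combined with the preliminary $C^1$ straightening of the one-dimensional invariant manifolds that you correctly mention. Since you defer the hard step to exactly the same references the paper cites, this imprecision in the heuristic does not affect the validity of the sketch; just be aware that the exponent is not produced by a one-line bookkeeping argument.
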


\vspace{20 pt}

{Let us start now the proof of Theorem \ref{lemma-apr}.}\\

\begin{proof}[{\bf Theorem \ref{lemma-apr}}]
From \eqref{p/q+z0}, we have that there exist $n_0$ and $m_0$ such that $f^{m_0q}(z_0)\in U$ and $f^{-n_0q+1}(z_0)\in U$. Let us denote  their images under $\Phi$ as $$A=\Phi(f^{m_0q}(z_0)) \quad \text{and} \quad B=\Phi(f^{-n_0q+1}(z_0)).$$ 
\begin{figure}[htb]\def\svgwidth{350pt}\label{saddle1}
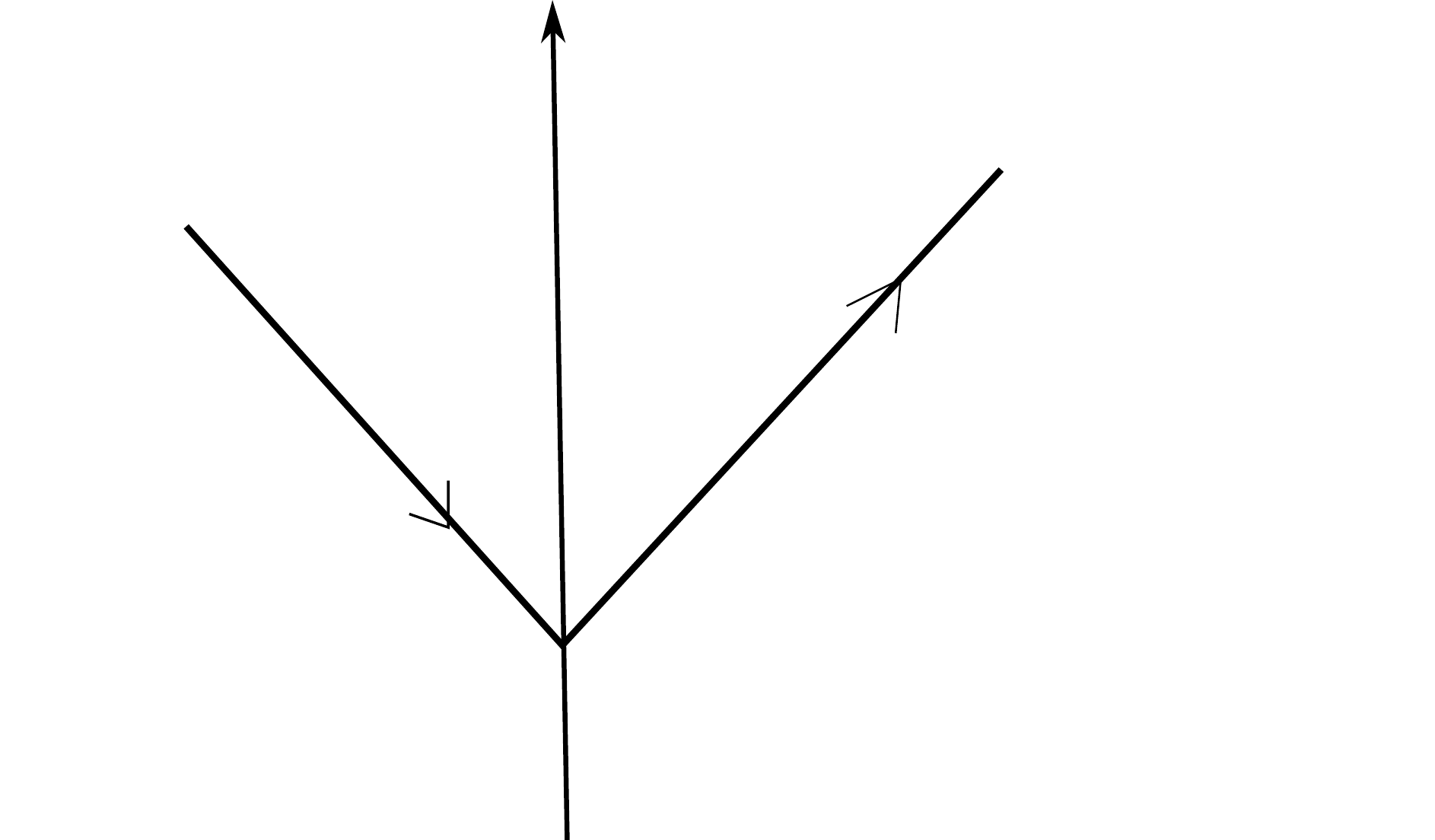
\caption{Saddle}
\label{saddle1}
\end{figure}

For the sake of simplicity, hereafter in this proof, we will write $\Lambda_{p/q}$ and $\lambda_{\frac{p}{q}}$  as $\Lambda$ and $\lambda.$ \\

Now we consider the standard $\bar{x}\text{-}\bar{y}$ plane, where $x_1$ is located at the origin $O$.  
The unit eigenvectors corresponding to the eigenvalues $\lambda$ and $\lambda^{-1}$  are respectively, 
$$\left(\begin{array}{c}-\sin\theta\\\cos\theta\end{array}\right)\quad \text{and}\quad\left(\begin{array}{c}\cos\theta\\\sin\theta\end{array}\right).$$  See  Figure~ \ref{saddle1}.
Using  the change of coordinates
$$R_{\theta}:\mathbb{R}^2\to\mathbb{R}^2, \quad \left(\begin{array}{c}\bar{\xi} \\ \bar{\eta}\end{array}\right)=R_{\theta}\left(\begin{array}{c}\bar{x}\\\bar{y}\end{array}\right):=\left(\begin{array}{cc}\cos\theta &\sin\theta \\ -\sin\theta&\cos\theta\end{array}\right)\left(\begin{array}{c}\bar{x} \\ \bar{y}\end{array}\right),$$
we tranform   the map $$\left(\begin{array}{c}\bar{x} \\ \bar{y}\end{array}\right)\mapsto\Lambda\left(\begin{array}{c}\bar{x} \\ \bar{y}\end{array}\right)$$
into 
$$\left(\begin{array}{c}\bar{\xi} \\ \bar{\eta}\end{array}\right)\mapsto\left(\begin{array}{c}\lambda^{-1}\bar{\xi} \\ \lambda\bar{\eta}\end{array}\right).$$
In the $\bar{\xi}\text{-}\bar{\eta}$ coordinate, we denote $$A=(0,\eta)\quad \text{and}\quad B= (\xi,0).$$

Let $\dots, y_{-1},\; y_0,\; y_1,\dots$ be a periodic orbit with minimal period $(n+n_0+m_0)q-1$, rotation number $$\frac{(n+n_0+m_0)p}{(n+n_0+m_0)q-1},$$ and $y_0\in U_{z_0} $, which is a small neighborhood of    $z_0$  such that 
$$f^{m_0q}(U_{z_0})\subset U\quad {\rm and} \quad f^{-n_0q+1}(U_{z_0})\subset U.$$
We choose $n\in\mathbb{N}$ to be sufficiently large.

 Let us denote $$A'=f^{m_0q}(y_0),\quad B'=f^{-n_0q+1}(y_0).$$
 Then in the coordinates $\bar{\xi}\text{-}\bar{\eta}$, they become 
 
 $$A'=\left(\begin{array}{c}\delta_A \\ \eta+\delta_A'\end{array}\right) \quad {\rm and}\quad B'=\left(\begin{array}{c}\xi+\delta_B \\ \delta_B'\end{array}\right).$$
 Here the $\delta$'s are  small numbers to be determined.
 
 By the periodicity of $y_0$, we have that 
 $$\left(\begin{array}{c}\lambda^{-n}\delta_A \\ \lambda^n(\eta+\delta_A')\end{array}\right)
 =\left(\begin{array}{c}\xi+\delta_B\\ \delta_B'\end{array}\right),$$
 and
 $$\left(\begin{array}{c}\delta_A \\ \delta_A'\end{array}\right)=\left(\begin{array}{cc}a&b \\ c&d\end{array}\right)\left(\begin{array}{c}\delta_B\\ \delta_B'\end{array}\right)+\mathcal{O}(\delta_{B}^{3/2}+(\delta_B')^{3/2}),$$
 where the $2\times2$ matrix on the right side is the linear part of the global map $R_{\theta}\circ\Phi^{-1}\circ f^{(n_0+m_0)q-1}\circ \Phi\circ R_{-\theta}$ at the point $B$ (the global map is of $C^{1,1/2}$).
 Due to the transversal intersections between the stable and unstable manifolds at points $A$ and $B$, we have $a\neq0$.
Therefore,\[ \begin{cases}\delta_A=\xi\lambda^n+\mathcal{O}(\lambda^{2n}),\quad\delta'_A=\frac{c\xi-\eta}{a}\lambda^n+\mathcal{O}(\lambda^{3n/2}),\\
 \delta_B'=\eta\lambda^n+\mathcal{O}(\lambda^{2n}),\quad \delta_B=\frac{\xi-b\eta}{a}\lambda^n+\mathcal{O}(\lambda^{3n/2}).\end{cases}\]
 
 \vspace{5 pt}
 
 Now, let $n_1=\lfloor{n/2}\rfloor$ and $n_2=n-n_1$. 
 
 In the $\bar{\xi}\text{-}\bar{\eta}$ coordinates, for $i=0,1,\dots,n_1$, the difference between the images of the  points $f^{m_0q+iq}(y_0)$ and $f^{m_0q+iq}(z_0)$ is
$$ \left(\begin{array}{c}\lambda^{-i}\delta_A \\(\eta+\delta_A')\lambda^i\end{array}\right)-\left(\begin{array}{c}0 \\ \eta\lambda^i\end{array}\right)=\left(\begin{array}{c}\xi\lambda^{n-i} \\ 0\end{array}\right)+\left(\begin{array}{c}0\\\frac{c\xi-\eta}{a}\lambda^{n+i}\end{array}\right)+\mathcal{O}(\lambda^{3n/2})$$
and for $j=0,1,\dots, n_2$, the difference between the images of the points $f^{-n_0q+1-jq}(y_0)$ and $f^{-n_0q+1-jq}(z_0)$ is
$$\left(\begin{array}{c}\lambda^j(\xi+\delta_B) \\\delta_B'\lambda^{-j}\end{array}\right)-\left(\begin{array}{c}\xi\lambda^j \\0\end{array}\right)=\left(\begin{array}{c}0 \\ \eta\lambda^{n-j}\end{array}\right)+\left(\begin{array}{c}\frac{\xi-b\eta}{a}\lambda^{n+j}\\0\end{array}\right)+\mathcal{O}(\lambda^{3n/2}).\\$$

\vspace{5 pt}

 Back to the coordinate $(\bar{x},\bar{y})$.

For $i=0,1,\dots, n_1$, along the {stable direction}, the differences between the periodic orbit and the homoclinic orbit is
\[\begin{split}&\left(\begin{array}{cc}\cos\theta&-\sin\theta\\\sin\theta&\cos\theta\end{array}\right)\left(\begin{array}{c}\xi\lambda^{n-i}\\\frac{c\xi-\eta}{a}\lambda^{n+i}\end{array}\right)+\mathcal{O}(\lambda^{3n/2})\\
&=\left(\begin{array}{c}\cos\theta \\\sin\theta\end{array}\right)\xi\lambda^{n-i}+\left(\begin{array}{c}-\sin\theta\\\cos\theta\end{array}\right)\frac{c\xi-\eta}{a}\lambda^{n+i}+\mathcal{O}(\lambda^{3n/2}).\end{split}\]
For $j=0,1,\dots,n_2$, along {the unstable direction}
\[\begin{split}&\left(\begin{array}{cc}\cos\theta&-\sin\theta\\\sin\theta&\cos\theta\end{array}\right)\left(\begin{array}{c}\frac{\xi-b\eta}{a}\lambda^{n+j}\\\eta\lambda^{n-j}\end{array}\right)+\mathcal{O}(\lambda^{3n/2})\\
&=\left(\begin{array}{c}-\sin\theta \\\cos\theta\end{array}\right)\eta\lambda^{n-j}+\left(\begin{array}{c}\cos\theta\\\sin\theta\end{array}\right)\frac{\xi-b\eta}{a}\lambda^{n+j}+\mathcal{O}(\lambda^{3n/2}).\end{split}\]

\vspace{5 pt}

Back to the original coordinate $(s,r)$. 

For the  orbits $\dots, y_{-1},\;y_0,\;y_1,\dots$ and $\dots,z_{-1},z_0,z_1,\dots$, by Lemma~\ref{normal-form}, we have that 
for $i=0,1,\dots, n_1$, 
\begin{equation}\label{stable-split}z_{m_0q+iq}-y_{m_0q+iq}=-\left(\begin{array}{c}\cos\theta \\\sin\theta\end{array}\right)\xi\lambda^{n-i}+\mathcal{O}(\lambda^{i/2} \lambda^{n-i}),\end{equation}
and for $j=0,1,\dots,n_2$,
\begin{equation}\label{unstable-split}z_{-n_0q+1-jq}-y_{-n_0q+1-jq}=-\left(\begin{array}{c}-\sin\theta\\\cos\theta\end{array}\right)\eta\lambda^{n-j}+\mathcal{O}(\lambda^{j/2} \lambda^{n-j}).\end{equation}

Now consider the quantity 
$$I=\sum_{i=-(n_0+n_2)q+1}^{(m_0+n_1)q-1}h(z_i,z_{i+1})-h(y_i,y_{i+1}).$$
We split it into three parts:
The first part corresponds to the sum far away from the minimal periodic orbit $X_{p/q}$:
$$\mathcal{I}_0=\sum_{i=-n_0q+1}^{m_0q-1}h(z_i,z_{i+1})-h(y_i,y_{i+1}),$$
the second part is along the unstable manifold:
$$\mathcal{I}_1=\sum_{i=-(n_0+n_2)q+1}^{-n_0q}h(z_i,z_{i+1})-h(y_i,y_{i+1}),$$
and the third part is close to  the stable manifold:
$$\mathcal{I}_2=\sum_{i=m_0q}^{(m_0+n_1q)-1}h(z_i,z_{i+1})-h(y_i,y_{i+1}).$$

\vspace{10 pt}

\begin{itemize}
\item
Since along the periodic orbit $y_i$, $i\in\mathbb{Z}$, 
\begin{equation}\label{periodic-null}\partial_2h(y_i,y_{i+1})+\partial_1 h(y_{i+1},y_{i+2})=0,\quad i\in\mathbb{Z}\end{equation}
we  have that 
\begin{equation}\label{I0}\begin{split}\mathcal{I}_0&=\sum_{i=-n_0q+1}^{m_0q-1}h(z_i,z_{i+1})-h(y_i,y_{i+1}).\\
&=\partial_1h(z_{-n_0q+1},z_{ -n_0q+2})(y_{-n_0q+1}-z_{-n_0q+1})\\
&\quad +\sum_{i=-n_0q+1}^{m_0q-1}\frac{1}{2}D^2h(y_i,y_{i+1})(z_i-y_i)^2\\
&\quad+\partial_2h(z_{m_0q-1}, z_{m_0q})(y_{m_0q}-z_{m_0q})+\mathcal{O}((m_0q+n_0q-1)\lambda^{3n})\\
&=\partial_1h(y_{-n_0q+1},y_{ -n_0q+2})(z_{-n_0q+1}-y_{-n_0q+1})\\
&\quad+\partial_2h(y_{m_0q-1}, y_{m_0q})(z_{m_0q}-y_{m_0q})+\mathcal{O}(\lambda^{2n}).
\end{split}\end{equation}

\vspace{10 pt}

\item Next, we  consider $\mathcal{I}_1$, which is the sum of  the terms along the unstable manifold.

For $j=1,\dots, n_2$, let us denote 
$$\tilde{z}_k^j=z_{-n_0q-jq+1+k},\quad  \tilde{y}_k^j=y_{-n_0q-jq+1+k},\quad k=0,\dots, q,$$
and 
$$I_j:=\sum_{k=0}^{q-1}h(\tilde{z}_k^j,\tilde{z}_{k+1}^j)-h(\tilde{y}^j_k,\tilde{y}^j_{k+1}).$$
Clearly,
$\mathcal{I}_1=\sum_{j=1}^{n_2}I_j$.
We continue to split it into two other sums:
$$\mathcal{I}_1=\sum_{j=1}^{n_2/2-1}I_j+\sum_{j=n_2}^{n_2/2}I_j.$$
Let us first consider the cases $j=n_2/2,\dots,n_2$. 

By \eqref{unstable-split} and  \eqref{periodic-null}, we have
\[\begin{split}I_j&=\partial_1h(\tilde{y}^j_0,\tilde{y}^j_1)(\tilde{z}^j_0-\tilde{y}^j_0)+\partial_2h(\tilde{y}^j_{q-1},\tilde{y}^j_{q})(\tilde{z}^j_{q}-\tilde{y}^j_{q})\\
&\quad +\frac{1}{2}\sum_{k=0}^{q-1}\left(\begin{array}{c}\tilde{y}^j_k-\tilde{z}^j_k\\\tilde{y}^j_{k+1}-\tilde{z}^j_{k+1}\end{array}\right)^TD^2h(\tilde{y}_k^j,\tilde{y}_{k+1}^j)\left(\begin{array}{c}\tilde{y}^j_k-\tilde{z}^j_k\\\tilde{y}^j_{k+1}-\tilde{z}^j_{k+1}\end{array}\right)+\mathcal{O}(\lambda^{3(n-i)})
\end{split}\]
where 
\[\begin{split}D^2h(\tilde{y}_k^j,\tilde{y}_{k+1}^j)&=\left(\begin{array}{cc}\partial_{11}h(\tilde{y}^j_{k},\tilde{y}^j_{k+1}) & \partial_{12}h(\tilde{y}^j_{k},\tilde{y}^j_{k+1}) \\\partial_{21}h(\tilde{y}^j_{k},\tilde{y}^j_{k+1}) & \partial_{22}h(\tilde{y}^j_{k},\tilde{y}^j_{k+1})\end{array}\right)\\
&=\left(\begin{array}{cc}\partial_{11}h(x_{k+1},x_{k+2}) & \partial_{12}h(x_{k+1},x_{k+2}) \\\partial_{21}h(x_{k+1},x_{k+2}) & \partial_{22}h(x_{k+1},x_{k+2})\end{array}\right)+\mathcal{O}(\lambda^{n/4}).\end{split}\]
Here we have use that fact that for $j=n_2/2,\dots,n_2$,   $\tilde{y}_k^j$ are  at least \mbox{$\mathcal{O}(\lambda^{n/4})$-close} to $x_{k+1}$, $k=0,\dots,q$.

From \eqref{unstable-split} we know that
$$\tilde{z}_{k}^j-\tilde{y}_k^j=\lambda^{n-j}\prod_{i=0}^{k-1} Df(x_{i+1})\left(\begin{array}{c}\sin\theta\\-\cos\theta\end{array}\right)\eta+\mathcal{O}(\lambda^{n-j+\frac{n}{8}}),\quad k=1,\dots,q.$$
Let  us denote $Z_0^+=\sin\theta$, and 
\begin{equation}\label{Z_K+}Z_k^+=\pi_{1}\Big[\prod_{i=0}^{k-1} Df(x_{i+1})\left(\begin{array}{c}\sin\theta\\-\cos\theta\end{array}\right)\Big],\quad k=1,\dots,q\end{equation}
where $\pi_1$ is the projection on the first coordinate.
Denote  \begin{equation}\label{Z+}\mathcal{Z}_+=(Z_0^+,Z_1^+,\dots,Z_q^+).\end{equation}
Then we have
\[\begin{split}&\sum_{k=1}^{q-1}\left(\begin{array}{c}\tilde{y}^j_k-\tilde{z}^j_k\\\tilde{y}^j_{k+1}-\tilde{z}^j_{k+1}\end{array}\right)^TD^2h(\tilde{y}_k^i,\tilde{y}_{k+1}^i)\left(\begin{array}{c}\tilde{y}^j_k-\tilde{z}^j_k\\\tilde{y}^j_{k+1}-\tilde{z}^j_{k+1}\end{array}\right)\\
&=\mathcal{Z}_+\mathbb{W}(X_{p/q})\mathcal{Z}_+^T\eta^2\lambda^{2(n-j)}+\mathcal{O}(\lambda^{2(n-j)+\frac{n}{8}}),\end{split}\]
where
\begin{equation}\label{matrix}\mathbb{W}(X_{p/q})=\left(\begin{array}{ccccc}\eta_1 & \sigma_1 & 0 & \dots & 0 \\\sigma_1 & \eta_2 & \sigma_2 & \dots & 0 \\ \vdots&   \ddots & \ddots &\ddots&\vdots \\&  & \sigma_{q-1}&\eta_{q}&\sigma_q\\0 &0& \dots  & \sigma_{q} & \eta_{q+1}\end{array}\right)_{(q+1)\times (q+1)}\end{equation}
with $\eta_1=\partial_{11}h(x_1,x_2)$, $\eta_{q+1}=\partial_{22}h(x_{0},x_1)$,
$$\eta_i=\partial_{22}h(x_{i-1},x_i)+\partial_{11}h(x_i,x_{i+1}),\quad i=2,\dots,q$$
and   $$\sigma_{i}=\partial_{12}h(x_{i},x_{i+1}), \quad i=1,\dots,q.$$
Then for $j=n_1/2,\dots,n_2$, we have
\[I_j=\partial_1h(\tilde{y}^j_0,\tilde{y}^j_1)(\tilde{z}^j_0-\tilde{y}^j_0)+\partial_2h(\tilde{y}^j_{q-1},\tilde{y}^j_{q})(\tilde{z}^j_{q}-\tilde{y}^j_{q})+C_{q+}\eta^2\lambda^{2(n-j)}+\mathcal{O}(\lambda^{2(n-j)+\frac{n}{8}})\]
where \begin{equation}C_{q+}=\frac{1}{2}\mathcal{Z}_+\mathbb{W}(X_{p/q})\mathcal{Z}_+^T.\end{equation}
And for $j=1,\dots, n_2/2-1$, by \eqref{unstable-split}, we have
$$I_j=\partial_1h(\tilde{y}^j_0,\tilde{y}^j_1)(\tilde{z}^j_0-\tilde{y}^j_0)+\partial_2h(\tilde{y}^j_{q-1},\tilde{y}^j_{q})(\tilde{z}^j_{q}-\tilde{y}^j_{q})+\mathcal{O}(\lambda^{2(n-j))}).$$
Hence
\begin{equation}\label{I1}\begin{split}\mathcal{I}_1&=\sum_{j=1}^{n_2}I_j=\sum_{j=1}^{n_2/2-1}I_j+\sum_{j=n_2/2}^{n_2}I_j\\
&=\partial_1h(y_{-(n_0+n_2)q+1},y_{-(n_0+n_2)q+2})(z_{-(n_0+n_2)q+1}-y_{-(n_0+n_2)q+1})\\
&\quad+\partial_2h(y_{-n_0q+1},y_{-n_0q+2})(z_{n_0q+1}-y_{-n_0q+1})\\
&\quad+C_{q+}\eta^2\frac{\lambda^{2(n-n_2)}}{1-\lambda^2}+\mathcal{O}(\lambda^{9n/8}).\end{split}\end{equation}

\vspace{10 pt}

\item Now we deal with $\mathcal{I}_2$, the sum of  the terms along the stable manifold.

For $i=1,\dots, n_1$, let us denote 
$$\bar{z}_k^i=z_{m_0q+(i-1)q+k},\quad \bar{y}_{k}^i=y_{m_0q+{i-1}q+k},\quad k=0,\dots, q,$$
and
$$\bar{I}_i=\sum_{k=0}^{q-1}h(\bar{z}_k^i,\bar{z}_{k+1}^i)-h(\bar{y}_k^i,\bar{y}_{k+1}^i).$$
Clearly, $\mathcal{I}_2=\sum_{i=1}^{n_1}\bar{I}_i$. We split it into two parts:
$$\mathcal{I}_2=\sum_{i=1}^{n_1/-1}\bar{I}_i+\sum_{i=n_2/2}^{n_2}\bar{I}_i.\\$$

\vspace{5 pt}

First, consider the cases  $i=n_1/2,\dots,n_1$.
By \eqref{periodic-null}, we have that

\[\begin{split}\bar{I}_i&=\partial_{1}h(\bar{y}_0^i,\bar{y}_1^i)(\bar{z}_0^i-\bar{y}_0^i)+\partial_2h(\bar{y}_{q-1}^i,\bar{y}_q^i)(\bar{z}_q^i-\bar{y}_q^i)\\
 &\quad+\frac{1}{2}\sum_{k=0}^{q-1}\left(\begin{array}{c}\bar{y}^i_k-\tilde{z}^i_k\\\bar{y}^i_{k+1}-\bar{z}^i_{k+1}\end{array}\right)^TD^2h(\bar{y}_k^i,\bar{y}_{k+1}^i)\left(\begin{array}{c}\bar{y}^i_k-\bar{z}^i_k\\\bar{y}^i_{k+1}-\bar{z}^i_{k+1}\end{array}\right)+\mathcal{O}(\lambda^{3(n-i)}).\end{split}\]
 
Due to \eqref{stable-split}, we have
$$\bar{z}_0^i-\bar{y}_0^i=-\lambda^{n-i+1}\left(\begin{array}{c}\cos\theta\\\sin\theta\end{array}\right)\xi+\mathcal{O}(\lambda^{n-i+\frac{n}{8}}),$$
and for $k=1,\dots,q$,
$$\bar{z}_k^i-\bar{y}_k^i=-\lambda^{n-i+1}\prod_{l=0}^{k-1}Df(x_{l+1})\left(\begin{array}{c}\cos\theta \\\sin\theta\end{array}\right)\xi+\mathcal{O}(\lambda^{n-i+\frac{n}{8}}).$$
Denote $Z_0^-=-\cos\theta$, 
\begin{equation}\label{Z_K-}Z_k^-=\pi_1\Big[-\prod_{l=0}^{k-1}Df(x_{l+1})\left(\begin{array}{c}\cos\theta \\\sin\theta\end{array}\right)\Big], k=1,\dots,q,\end{equation}
and
\begin{equation}\label{Z-}\mathcal{Z}_-=(Z_0^-,\dots,Z_{q}^-).\end{equation}
Then
\[\begin{split}\bar{I}_i&=\partial_{1}h(\bar{y}_0^i,\bar{y}_1^i)(\bar{z}_0^i-\bar{y}_0^i)+\partial_2h(\bar{y}_{q-1}^i,\bar{y}_q^i)(\bar{z}_q^i-\bar{y}_q^i)\\
 &\quad+\frac{1}{2}\mathcal{Z}_-\mathbb{W}(X_{p/q})\mathcal{Z}_-^T\xi^2\lambda^{2(n-i+1)}+\mathcal{O}(\lambda^{2(n-i)+\frac{n}{8}}),\end{split}\]
where $\mathbb{W}(X_{p/q})$ is defined in \eqref{matrix}.
Moreover, for $i=1,\dots,n_1/2-1$ we have 
$$\partial_{1}h(\bar{y}_0^i,\bar{y}_1^i)(\bar{z}_0^i-\bar{y}_0^i)+\partial_2h(\bar{y}_{q-1}^i,\bar{y}_q^i)(\bar{z}_q^i-\bar{y}_q^i)+\mathcal{O}(\lambda^{2(n-i)}).$$
Then
\begin{equation}\label{I2}\begin{split}\mathcal{I}_2&=\sum_{i=1}^{n_1/2-1}\bar{I}_i+\sum_{i=n_1/2}^{n_1}\bar{I}_i\\&=\partial_1h(y_{m_0q},y_{m_0q+1})(z_{m_0q}-y_{m_0q})\\
&\quad+\partial_2h(y_{(m_0+n_1)q-1},y_{(m_0+n_1)q})(z_{(m_0+n_1)q}-y_{(m_0+n_1)q})\\
&\quad+C_{q-}\xi^2\frac{\lambda^{2(n-n_1+1)}}{1-\lambda^2}+\mathcal{O}(\lambda^{9n/8})
\end{split}\end{equation}
where 
\begin{equation} C_{q-}=\frac{1}{2}\mathcal{Z}_-\mathbb{W}(X_{p/q})\mathcal{Z}_-^T.\end{equation}
\end{itemize}

\vspace{10 pt}

To sum up \eqref{I0}, \eqref{I1} and \eqref{I2}, we have 
\[\begin{split}I&=\mathcal{I}_0+\mathcal{I}_1+\mathcal{I}_2\\
&=\partial_1h(y_{-(n_0+n_2)q+1},y_{-(n_0+n_2)q+2})(z_{-(n_0+n_2)q+1}-y_{-(n_0+n_2)q+1})\\
&\quad +\partial_2h(y_{(m_0+n_1)q-1},y_{(m_0+n_1)q})(z_{(m_0+n_1)q}-y_{(m_0+n_1)q})\\
&\quad +C_{q+}\eta^2\frac{\lambda^{2(n-n_2)}}{1-\lambda^2} +C_{q-}\xi^2\frac{\lambda^{2(n-n_1+1)}}{1-\lambda^2}+\mathcal{O}(\lambda^{9n/8}).\end{split}\]

\vspace{10 pt}

Now we consider the tail:
\[I_+=\sum_{i=m_0q+n_1q}^{+\infty}h(z_i,z_{i+1})-h(x_{i+1},x_{i+2}).\]
Since along the periodic orbit $X_{p/q}$, 
\begin{equation}\label{periodic-null2}\partial_2h(x_i,x_{i+1})+\partial_1h(x_{i+1},x_i)=0,\quad i\in\mathbb{Z},\end{equation}
we have 
\[\begin{split}I_+&=\partial_1h(x_1,x_2)(z_{m_0q+n_1q}-x_1)\\
&\quad +\frac{1}{2}\sum_{i=m_0q+n_1q}^{+\infty}\left(\begin{array}{c}z_i-x_{i+1}\\z_{i+1}-x_{i+2}\end{array}\right)^TD^2h(x_{i+1},x_{i+2})\left(\begin{array}{c}z_i-x_{i+1}\\z_{i+1}-x_{i+2}\end{array}\right)+\mathcal{O}(\lambda^{3n/2}).\end{split}\]
Since $x_{m_0q+n_1q+1}=x_1$ and
$$z_{m_0q+n_1q}-x_1=\lambda^{n_1}\left(\begin{array}{c}-\sin\theta \\\cos\theta\end{array}\right)\eta+\mathcal{O}(\lambda^{3n_1/2}),$$
by the same calculation of $\mathcal{I}_1$, we obtain
\begin{equation}\label{I+}\begin{split}I_+&=\partial_1h(x_1,x_2)(z_{m_0q+n_1q}-x_1)+\frac{\lambda^{2n_1}}{1-\lambda^2}\frac{\eta^2}{2}\mathcal{Z}_+\mathbb{W}(X_{p/q})\mathcal{Z}_+^T+\mathcal{O}(\lambda^{5n/4})\\
&=\partial_1h(x_1,x_2)(z_{m_0q+n_1q}-x_1)+\frac{\lambda^{2n_1}}{1-\lambda^2}C_{q+}\eta^2+\mathcal{O}(\lambda^{5n/4}).\end{split}\end{equation}
Similarly,
we have
\begin{equation}\label{I-}\begin{split} I_-&=\sum_{i=-n_0q+1-n_2q}^{-\infty}h(z_{i-1},z_i)-h(x_{i-1},x_i)\\
&=\partial_2h(x_0,x_1)(z_{-(m_0+n_2)q+1}-x_1)+C_{q-}\xi^2\frac{\lambda^{2(n_2+1)}}{1-\lambda^2}+\mathcal{O}(\lambda^{5n/4}).
\end{split}\end{equation}
Then \begin{equation}\begin{split}\mathbb{I}&:=I+I_-+I_+\\
&=\partial_1h(y_{-(n_0+n_2)q+1},y_{ -(n_0+n_2)q+2})(z_{-(n_0+n_2)q+1}-y_{-n_0q-n_2q+1})\\
 &\quad+\partial_2h(y_{(m_0+n_1)q-1}, y_{(m_0+n_1)q})(z_{(m_0+n_1)q}-y_{(m_0+n_1)q})\\
 &\quad+\partial_2h(x_0,x_1)(z_{-(m_0+n_2)q+1}-x_1)+\partial_1h(x_1,x_2)(z_{m_0q+n_1q}-x_1)\\
 &\quad+2C_{q+}\eta^2\frac{\lambda^{2n_1}}{1-\lambda^2}+2C_{q-}\xi^2\frac{\lambda^{2(n_2+1)}}{1-\lambda^2}+\mathcal{O}(\lambda^{9n/8}).\end{split}\end{equation}
 Since $y_{(m_0+n_1)q}=y_{-(n_0+n_2)q+1}$, by \eqref{periodic-null}, 
we have
\[\begin{split}&\partial_2h(y_{(m_0+n_1)q-1}, y_{(m_0+n_1)q})(z_{(m_0+n_1)q}-y_{(m_0+n_1)q})\\
 &+\partial_1h(y_{-(n_0q+n_2)q+1},y_{ -(n_0+n_2)q+2})(z_{-(n_0+n_2)q+1}-y_{-(n_0+n_2)q+1})\\
& =\partial_2h(y_{(m_0+n_1)q-1}, y_{(m_0+n_1)q})(z_{(m_0+n_1)q}-z_{-(n_0+n_2)q+1}).\end{split}\]
Notice that   
  \[\begin{split}y_{(m_0+n_1)q}-x_1&=y_{(m_0+n_1)q}-z_{(m_0+n_1)q}+z_{(m_0+n_1)q}-x_1\\
  &=\left(\begin{array}{c}\cos\theta\\\sin\theta\end{array}\right)\xi\lambda^{n-n_1}+\left(\begin{array}{c}-\sin\theta\\\cos\theta\end{array}\right)\eta\lambda^{n_1}+\mathcal{O}(\lambda^{3n/4}),\end{split}\]
  
  $$y_{(m_0+n_1)q-1}-x_0=\left(\begin{array}{c}a'(\xi\lambda^{n_2}\cos\theta-\eta\lambda^{n_1}\sin\theta)+b'( \xi\lambda^{n_2}\sin\theta+\eta\lambda^{n_1}\cos\theta)\\\gamma_0'\end{array}\right)+\mathcal{O}(\lambda^{3n/4}),$$
  
  $$z_{(m_0+n_1)q}-z_{-(n_0+n_2)q+1}=\left(\begin{array}{c}-\eta\lambda^{n_1}\sin\theta-\xi\lambda^{n_2}\cos\theta \\ \gamma_1'\end{array}\right)+\mathcal{O}(\lambda^{3n/4}),$$
  where $a'$ and $b'$ are from the expression $$Df^{-1}(x_1)=\left(\begin{array}{cc}a'&b'\\**&*\end{array}\right),$$
  with
  $$a'=\frac{-\partial_{22}h(x_0,x_1)}{\partial_{12}h(x_0,x_1)},\quad b'=\frac{1}{\partial_{12}h(x_0,x_1)},$$
  (here we have use \eqref{derivative2}).
Thus we have 
\[\begin{split}&\partial_2h(y_{(m_0+n_1)q-1}, y_{(m_0+n_1)q})\\&=\partial_2 h(x_0,x_1)
+\partial_{12}h(x_0,x_1)(y_{(m_0+n_1)q-1}-x_0)+\partial_{22}h(x_0,x_1)(y_{(m_0+n_1)q}-x_1)+\mathcal{O}(\lambda^{n})\\
&=\partial_2 h(x_0,x_1)+\partial_{22}h(x_0,x_1)\big[\xi\lambda^{n_2}\cos\theta-\eta\lambda^{n_1}\sin\theta\big]+\mathcal{O}(\lambda^{3n/4})\\
&\quad+\partial_{12}h(x_0,x_1)\Big[-\frac{\partial_{22}h(x_0,x_1)}{\partial_{12}h(x_0,x_1)}(\xi\lambda^{n_2}\cos\theta-\eta\lambda^{n_1}\sin\theta)+\frac{1}{\partial_{12}h(x_0,x_1)}(\xi\lambda^{n_2}\sin\theta+\eta\lambda^{n_1}\cos\theta)\Big]\\
&=\partial_2h(x_0,x_1)+(\xi\lambda^{n_2}\sin\theta+\eta\lambda^{n_1}\cos\theta)+\mathcal{O}(\lambda^{3n/4}).\end{split}\]
Therefore,
\begin{equation}\label{cross}\begin{split}&\partial_2h(y_{(m_0+n_1)q-1}, y_{(m_0+n_1)q})(z_{(m_0+n_1)q}-z_{-(n_0+n_2)q+1})\\
&=\partial_2 h(x_0,x_1)(z_{(m_0+n_1)q}-z_{-(n_0+n_2)q+1})\\
&\quad+(\xi\lambda^{n_2}\sin\theta+\eta\lambda^{n_1}\cos\theta)(-\xi\lambda^{n_2}\cos\theta-\eta\lambda^{n_1}\sin\theta)+\mathcal{O}(\lambda^{5n/4})\\
&=\partial_2 h(x_0,x_1)(z_{(m_0+n_1)q}-z_{-(n_0+n_2)q+1})\\
&\quad-\sin\theta\cos\theta (\xi^2\lambda^{2n_2}+\eta^2\lambda^{2n_1})-\xi\eta\lambda^n+\mathcal{O}(\lambda^{5n/4}).\end{split}\end{equation}
Due to  \eqref{periodic-null2},
\[\begin{split}&\partial_2h(x_0,x_1)(z_{m_0+n_1q}-z_{(n_0+n_2)q+1})\\
&+\partial_2h(x_0,x_1)(z_{-(m_0+n_2)q+1}-x_1)+\partial_1h(x_1,x_2)(z_{m_0q+n_1q}-x_1)=0,\end{split}\]
hence we have,
\begin{equation}\label{final}\begin{split}\mathbb{I}&=2C_{q+}\eta^2\frac{\lambda^{2n_1}}{1-\lambda^2}+2C_{q-}\xi^2\frac{\lambda^{2(n_2+1)}}{1-\lambda^2}\\
&\quad-\sin\theta\cos\theta( \xi^2\lambda^{2n_2}+\eta^2\lambda^{2n_1})-\xi\eta\lambda^{n}+\mathcal{O}(\lambda^{9n/8}).\end{split} \end{equation}

\vspace{10 pt}

If $n$ is even, then we have
{
\be\label{evenC}
\begin{split}\mathbb{I}&=\Big(\frac{2C_{q+}\eta^2}{1-\lambda^2}+\frac{2C_{q-}\lambda^2\xi^2}{1-\lambda^2}-\xi^2\sin\theta\cos\theta-\eta^2\sin\theta\cos\theta-\xi\eta\Big)\frac{\lambda^{n_0+m_0+n}}{\lambda^{-m_0-n_0}}\\
&\quad+\mathcal{O}(\lambda^{9n/8})\\
&:=C_{p,q}\lambda^{n_0+m_0+n}+\mathcal{O}(\lambda^{9n/8}),\end{split}
\ee}
and 
if $n$ is odd, that is  $n=2n_1+1$, then
{\be \label{oddC}
\begin{split}\mathbb{I}&=\Big(\frac{2C_{q+}\eta^2}{\lambda(1-\lambda^2)}+\frac{2C_{q-}\xi^2\lambda^3}{1-\lambda^2}-\xi^2\lambda\sin\theta\cos\theta-\eta^2\lambda^{-1}\sin\theta\cos\theta-\xi\eta\Big)\frac{\lambda^{m_0+n_0+n}}{\lambda^{-m_0-n_0}}\\
&\quad+\mathcal{O}(\lambda^{9n/8})\\
&:=C_{p,q}'\lambda^{n_0+m_0+n}+\mathcal{O}(\lambda^{9n/8}).
\end{split}
\ee }

\vspace{10 pt}

Summarizing, the proof of the assertion follows by denoting $N=n_0+m_0+n$.
\end{proof}

\bigskip

\begin{remark} The constants in \eqref{final} are independent of the choice of the base point where we apply the normal form Lemma \ref{normal-form}. E.g, if we choose $x_2$ as the base point,  then in  \eqref{I1} and \eqref{I+}, the terms of the order $\lambda^{2n_1}$ become
$$C_{q+}\frac{\eta^2\lambda^{2n_1}}{1-\lambda^2}-\frac{1}{2}\Big[h_{11}(x_1,x_2)(Z_0^+)^2+2h_{12}(x_1,x_2)Z_0^+Z_1^++h_{22}(x_1,x_2)(Z_1^+)^2\Big]\eta^2\lambda^{2n_1}, $$
and the terms of order $\lambda^{2(n_2+1)}$ in \eqref{I2} and \eqref{I-} turn into
$$C_{q-}\frac{\xi^2\lambda^{2(n_2+1)}}{1-\lambda^2}+\frac{1}{2}\Big[h_{11}(x_1,x_2)(Z_0^-)^2+2h_{12}(x_1,x_2)Z_0^-Z_1^-+h_{22}(x_1,x_2)(Z_1^-)^2\Big]\xi^2\lambda^{2n_2}.$$
Those in \eqref{cross} become
\[\begin{split}&\Big[\partial_{12}h(x_1,x_2)(x_1,x_2)(Z_0^-Z_1^+-Z_0^+Z_1^-)+\partial_{22}h(x_1,x_2)(Z_1^-Z_1^+-Z_1^-Z_1^+)\Big]\xi\eta\lambda^n\\
&+\Big(\partial_{12}h(x_1,x_2)Z_0^+Z_1^++\partial_{22}h(x_1,x_2)(Z_1^+)^2\Big)\eta^2\lambda^{2n_1}\\
&+\Big(-\partial_{12}h(x_1,x_2)Z_0^-Z_1^--\partial_{22}h(x_1,x_2)(Z_1^-)^2\Big)\xi^2\lambda^{2n_2}.\end{split}\]
Then adding them up, using \eqref{derivative1}, \eqref{derivative2}, \eqref{Z_K+} and \eqref{Z_K-}, we have exactly \eqref{final}.\\
\end{remark}

\medskip

\begin{lemma}When $N$ is sufficiently  large, the periodic orbit obtained {in Theorem \ref{lemma-apr}}, is the one with the maximal perimeter, i.e., an Aubry-Mather periodic orbit.\\
\end{lemma}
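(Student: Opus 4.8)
Denote by $Y$ the periodic orbit $X_{Np,Nq-1}$ produced in Theorem~\ref{lemma-apr}; recall that, by construction, $d(z_0,Y)=\mathcal{O}(\lambda_{p/q}^{N})$ and that, in the normal-form coordinates of Lemma~\ref{normal-form}, $Y$ is the \emph{unique} ``single-loop'' periodic orbit of minimal period $Nq-1$ and rotation number $Np/(Nq-1)$ having a point in the prescribed small neighbourhood $U_{z_0}$ of $z_0$ --- uniqueness holding once $N$ is large because the fixed-point equations for the coordinates $A'=\Phi(f^{m_0q}(y_0))$, $B'=\Phi(f^{-n_0q+1}(y_0))$ of such an orbit form a contraction precisely when the transversality hypothesis holds, i.e. when the coefficient $a$ appearing in that proof is nonzero. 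On the other hand, by the Aubry--Mather theorem (equivalently, Birkhoff) there is a periodic orbit of rotation number $Np/(Nq-1)$ of maximal perimeter, which by assumption~(1) of Section~\ref{sec4} is unique; call it $X^{\star}$. The lemma is proved as soon as we show $X^{\star}=Y$.

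The plan is to establish that, for $N$ large, $X^{\star}$ is itself a single-loop orbit exponentially close to the homoclinic loop $\Sigma_{p/q}:=\overline{X_{p/q}\cup X_{p/q+}}$, and then to quote the uniqueness just recalled. First, since $Np/(Nq-1)\to p/q$ from above (see the computation in Proposition~\ref{prop2}), closedness of the set of minimal configurations under pointwise limits, together with the description of $\mathcal{M}_{p/q}$ from Section~\ref{sec3} and Proposition~\ref{prop1}, forces every limit of suitably re-indexed windows of the configurations $X^{\star}$ to be a minimal orbit of rotation symbol $p/q$ or $p/q+$; under assumptions~(1)--(2) and Proposition~\ref{prop1} the only such orbits are $X_{p/q}$ and $X_{p/q+}$ up to shifts. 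Hence for $N$ large $X^{\star}$ lies in any prescribed tube around $\Sigma_{p/q}$, spending all but boundedly many of its $Nq-1$ iterates near the hyperbolic orbit $X_{p/q}$ and making a single excursion that shadows $X_{p/q+}$. Next I would upgrade this $C^0$-shadowing to an exponential one: because $X_{p/q}$ is hyperbolic (assumption~(2)) and its invariant manifolds cross transversally along $X_{p/q+}$ (assumption~(3)), an orbit that $C^0$-shadows $\Sigma_{p/q}$ does so with error $\mathcal{O}(\lambda_{p/q}^{cN})$ for some $c>0$; in particular $X^{\star}$ can be indexed so that $d(x^{\star}_0,z_0)=\mathcal{O}(\lambda_{p/q}^{cN})$, and it makes exactly one loop around $\Sigma_{p/q}$.

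With this in hand, $X^{\star}$ is, for $N$ large, a single-loop periodic orbit through $U_{z_0}$ of minimal period $Nq-1$ and rotation number $Np/(Nq-1)$, so its coordinates in the normal-form chart of Lemma~\ref{normal-form} solve the same contracting fixed-point equations --- built from the same data $A=\Phi(f^{m_0q}(z_0))$, $B=\Phi(f^{-n_0q+1}(z_0))$ and the same linearized global map at $B$ --- that uniquely determine those of $Y$; hence $X^{\star}=Y$, and $Y$ is the maximal-perimeter, i.e. Aubry--Mather, periodic orbit of rotation number $Np/(Nq-1)$. As a consistency check, the resulting identity $L_{Np,Nq-1}=L_{Np,Nq-1}'$ agrees with Proposition~\ref{prop:barrier} and Theorem~\ref{lemma-apr} up to the $\mathcal{O}(\lambda_{p/q}^{9N/8})$ remainder. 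I expect the genuine difficulty to be the middle step: showing that minimality forces $X^{\star}$ to perform a \emph{single} excursion along $\Sigma_{p/q}$ --- ruling out the more intricate homoclinic excursions of the same period and rotation number that can a priori live in the horseshoe generated by the transverse intersection --- and then converting the resulting $C^0$-closeness into the exponential estimate needed to match $Y$'s fixed-point equations. The graph/ordering property of minimal configurations together with Proposition~\ref{prop1} are exactly what should rule out the intricate excursions.
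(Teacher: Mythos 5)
Your proposal follows essentially the same route as the paper's proof: first establish that the maximal-perimeter (Aubry--Mather) orbit of rotation number $Np/(Nq-1)$ accumulates on $z_0$ as $N\to+\infty$, then invoke local uniqueness near $z_0$ of periodic orbits of that period and rotation number, which follows from the hyperbolicity of $X_{p/q}$ and the transversality of the homoclinic intersection. The paper states both steps tersely (one sentence each), while you expand each — justifying the accumulation via upper semicontinuity of Mather sets and Proposition \ref{prop1}, and the uniqueness via the same contracting fixed-point system used in Theorem \ref{lemma-apr} — and you correctly flag the single-excursion/exponential-shadowing step as the place where the real work lies.
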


\begin{proof}
Let $X_{Np,Nq-1}'$ denote the periodic orbit with minimal period $Nq-1$, rotation number $\frac{Np}{Nq-1}$ and the  maximal perimeter (minimal action). Then the distance $d(z_0,X_{Np,Nq-1}')$ tends zero   as $N$ tends to $+\infty$.  By hyperbolicity,  there exists a neighborhood $U$ of $z_0$ which contains exactly one periodic orbit with minimal period $Nq-1$ and  rotation number  $\frac{Np}{Nq-1}$. Therefore $X_{Np,Nq-1}$ and $X'_{Np,Nq-1}$ coincide when $N$ is large enough. 
\end{proof}

\medskip

\brm \label{rm:general-case1}
It seems that Theorem \ref{lemma-apr} holds true in general, namely, 
suppose we have a hyperbolic periodic orbit of a billiard map and a transverse homoclinic orbit related to it. 
Then, the difference of perimeters should satisfy the estimate from Theorem \ref{lemma-apr}. \\
\erm

\begin{lemma} \label{log-limit}If the constants $C_{p,q}$ and $C_{p,q}'$ in Theorem \ref{lemma-apr} are not zero, then
$$\lim_{N\to+\infty}\frac{1}{N}\log|L_{Np, Nq-1}-N\cdot L_{p,q}+B_{p/q}(z_0)|=\log\lambda.\\$$
\end{lemma}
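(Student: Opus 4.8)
The plan is to deduce Lemma~\ref{log-limit} directly from Theorem~\ref{lemma-apr} together with the identification of the Aubry--Mather periodic orbit already established. First I would recall that, by the preceding lemma, for $N$ sufficiently large the periodic orbit $X_{Np,Nq-1}$ produced in Theorem~\ref{lemma-apr} \emph{is} the one with maximal perimeter, so that $L_{Np,Nq-1}=L_{Np,Nq-1}'$ and the asymptotic expansion from Theorem~\ref{lemma-apr} applies verbatim to the quantity appearing in the statement. Thus, writing $N=n_0+m_0+n$ as in the proof of Theorem~\ref{lemma-apr}, we have
\[
L_{Np,Nq-1}-N\cdot L_{p,q}+B_{p/q}(z_0)=\begin{cases}C_{p,q}\,\lambda^{N}+\mathcal{O}(\lambda^{9N/8}),& N\ \text{even},\\[2pt] C_{p,q}'\,\lambda^{N}+\mathcal{O}(\lambda^{9N/8}),& N\ \text{odd}.\end{cases}
\]

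Next I would take the logarithm, divide by $N$, and pass to the limit. Along the even subsequence, using $C_{p,q}\neq0$, one gets
\[
\frac{1}{N}\log\bigl|L_{Np,Nq-1}-N\cdot L_{p,q}+B_{p/q}(z_0)\bigr|
=\frac{1}{N}\log\bigl|C_{p,q}\lambda^{N}\bigl(1+\mathcal{O}(\lambda^{N/8})\bigr)\bigr|
=\log\lambda+\frac{\log|C_{p,q}|}{N}+\frac{1}{N}\log\bigl|1+\mathcal{O}(\lambda^{N/8})\bigr|,
\]
and since $0<\lambda<1$ both correction terms tend to $0$ as $N\to+\infty$; the same computation along the odd subsequence with $C_{p,q}'\neq0$ gives the same limit $\log\lambda$. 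Since both subsequences converge to $\log\lambda$, the full limit exists and equals $\log\lambda=\log\lambda_{p/q}$, which is the claim. Note that for this one only needs the nonvanishing hypothesis on $C_{p,q}$ and $C_{p,q}'$ and the fact that the error exponent $9/8$ strictly exceeds $1$; the precise value of the error exponent is irrelevant as long as it is $>1$.

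There is essentially no serious obstacle in this lemma itself: it is a routine consequence of the sharp asymptotics established in Theorem~\ref{lemma-apr}, and the only subtlety is that the leading constant is genuinely different on even and odd $N$, which is why both cases must be checked and why the hypothesis requires \emph{both} $C_{p,q}$ and $C_{p,q}'$ to be nonzero rather than just one of them. The one point worth stating carefully is that the lemma is \emph{conditional}: the nonvanishing of $C_{p,q}$ and $C_{p,q}'$ is not automatic, and it is exactly this non-degeneracy that will be verified for a generic billiard table in Lemma~\ref{generic-lemma}. Combining that genericity statement with Lemma~\ref{log-limit} and the rephrasing in Theorem~\ref{theorem1bis} (via Proposition~\ref{prop2}, Proposition~\ref{prop:barrier} and relation~\eqref{betaandMLS}) then yields assertion~(2) of the Main Theorem.
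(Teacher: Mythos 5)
Your argument is correct and is exactly the intended one. In fact, the paper states Lemma~\ref{log-limit} without writing out a proof---it is left implicit as an immediate consequence of Theorem~\ref{lemma-apr} together with the preceding (unnamed) lemma identifying $X_{Np,Nq-1}$ with the maximal-perimeter orbit for $N$ large---and your write-up fills in precisely that implicit argument, including the correct observation that both $C_{p,q}$ and $C_{p,q}'$ must be assumed nonzero because the two parities of $N$ carry different leading coefficients, and that any error exponent strictly greater than $1$ suffices after taking $\frac{1}{N}\log|\cdot|$.
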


\vspace{10 pt}

{In the remaining part of this section, we want to prove that these constants are generically non-zero.}
From now on, we use the notation $C_{p,q}(f)$, $C_{p,q}'(f)$, $\lambda(f)$, $\xi(f)$, etc.  to indicate explicitly the dependence on $f$.\\

\begin{lemma}\label{lemma-parametrized} There exist $\bar{\epsilon}_1>0$, $\bar{\epsilon}_2>0$ and a family of 
billiard maps $f_{\epsilon_1,\epsilon_2}$ parametrized by $\epsilon_1\in[-\bar{\epsilon}_1,\bar{\epsilon}_1]$ and 
$\epsilon_2\in[-\bar{\epsilon}_2,\bar{\epsilon}_2]$ such that
$$
f_{0,0}=f, \quad \lambda(f_{\epsilon_1,\epsilon_2})=\lambda(f),\quad\theta(f_{\epsilon_1,\epsilon_2})=\theta(f),
$$
and 
$$
\frac{d}{d\epsilon_1}\xi(f_{\epsilon_1,\epsilon_2})\neq0,\quad \frac{d}{d\epsilon_2}\eta(f_{\epsilon_1,\epsilon_2})\neq0.
$$
Moreover,
$$\|f_{\epsilon_1,\epsilon_2}-f\|_{C^{\tau}}\to0, \quad\text{as}\quad \epsilon_1\to0,\;\;\epsilon_2\to0.$$
\end{lemma}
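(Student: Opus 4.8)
The plan is to construct the family $f_{\epsilon_1,\epsilon_2}$ explicitly by perturbing the boundary curve $\partial\Omega$ in a neighborhood of two chosen points of the homoclinic orbit, in such a way that the linearization $\Lambda_{p/q}=Df^q(x_1)$ at the Aubry–Mather periodic orbit is left completely unchanged — hence $\lambda$ and $\theta$ are preserved — while the ``splitting parameters'' $\xi$ and $\eta$ (the $\bar\xi$-coordinate of $B=\Phi(f^{-n_0q+1}(z_0))$ and the $\bar\eta$-coordinate of $A=\Phi(f^{m_0q}(z_0))$, i.e. the signed distances of the homoclinic point to the stable and unstable manifolds along the respective transverse directions) vary with nonzero speed. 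The key observation is that $\Lambda_{p/q}$ depends only on the $2$-jet of $\partial\Omega$ at the $q$ points $\gamma(s_0),\dots,\gamma(s_{q-1})$ of the periodic orbit (through the partials $\partial_{11}h,\partial_{12}h,\partial_{22}h$ evaluated along the orbit, see \eqref{derivative1}), and more precisely depends only on the curvature of $\partial\Omega$ at those points and on the chord directions. Since the homoclinic orbit $X_{p/q+}$ is asymptotic to $X_{p/q}$ but (by minimality and the fact that it is not itself periodic) no point $z_i$ coincides with a point $x_j$, one can choose a large index $i_+$ (resp. $i_-$) so that $\gamma(z_{i_+})$ (resp. $\gamma(z_{i_-})$) is bounded away from all the $\gamma(x_j)$ and from the other chosen point; perturbing $\partial\Omega$ in small disjoint arcs around these two points does not affect the $q$-periodic orbit nor its linearization.

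First I would fix such arcs $I_+,I_-\subset\partial\Omega$, disjoint from each other and from small neighborhoods of $\gamma(s_0),\dots,\gamma(s_{q-1})$, and chosen so that exactly one forward-iterate-segment of the homoclinic orbit passes through each (this is where I use \eqref{p/q+z0}: far in the future the homoclinic orbit shadows $x_1$, far in the past it shadows $x_0$, and in between it makes finitely many reflections, at least two of which can be taken ``isolated''). I would then take $f_{\epsilon_1,\epsilon_2}$ to be the billiard map of the domain whose boundary agrees with $\partial\Omega$ outside $I_+\cup I_-$ and is a graph-type $C^\infty$ (hence $C^{\tau+1}$) perturbation of size $O(\epsilon_1)$ inside $I_+$ and $O(\epsilon_2)$ inside $I_-$, keeping strict convexity for $\epsilon_1,\epsilon_2$ small. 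The periodic orbit $X_{p/q}$ and its hyperbolicity persist unchanged (the perturbation is supported away from it), so $\Lambda_{p/q}(f_{\epsilon_1,\epsilon_2})=\Lambda_{p/q}(f)$ literally, giving $\lambda(f_{\epsilon_1,\epsilon_2})=\lambda(f)$ and $\theta(f_{\epsilon_1,\epsilon_2})=\theta(f)$; likewise the normal-form conjugacy $\Phi$ of Lemma~\ref{normal-form} can be chosen to depend continuously on the parameters and to be $\epsilon$-independent near $x_1$. The stable and unstable manifolds of the periodic orbit near $x_1$ are also unchanged; what moves is the \emph{homoclinic point}, because the orbit is forced to pass through the perturbed arcs. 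The remaining task is to compute $\frac{d}{d\epsilon_1}\xi$ and $\frac{d}{d\epsilon_2}\eta$ at $\epsilon_1=\epsilon_2=0$.

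For the derivative computation I would use the standard first-order (Melnikov-type / Poincaré–Mel'nikov) formula for the splitting of separatrices: the first variation of the signed splitting distance equals an absolutely convergent sum of the first variations of the generating function along the homoclinic orbit, i.e. essentially $\sum_{i} \partial_\epsilon h_\epsilon(z_i,z_{i+1})|_{\epsilon=0}$, suitably projected onto the relevant transverse direction via the (unperturbed) stable/unstable invariant sections. Because the perturbation inside $I_+$ is supported at a single reflection and is parametrized so that its ``height function'' has, say, a nonzero first moment (one is free to choose the perturbation profile, e.g. a fixed bump times $\epsilon_1$), only one or two terms of this sum are nonzero and their sum is a fixed nonzero multiple of $\epsilon_1$ to leading order; a symmetric choice for $I_-$ handles $\eta$. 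Concretely, $\partial_\epsilon\,\ell(z_i,z_{i+1})$ when $\gamma(z_i)$ is displaced normally by an amount proportional to the bump equals $\epsilon_1$ times (the bump value) times $(\cos\phi_i^{-} - \cos\phi_i^{+})$ or a similar nonvanishing geometric factor coming from \eqref{length}, and transversality of the heteroclinic intersection (Assumption (3)) guarantees the projection onto the transverse direction is an isomorphism, so the $\epsilon_1$-derivative of $\xi$ is this nonzero number; one then just needs the perturbation not to be ``accidentally orthogonal,'' which is arranged by choosing the bump and, if necessary, its location. The $C^\tau$-convergence $\|f_{\epsilon_1,\epsilon_2}-f\|_{C^\tau}\to0$ is immediate from $C^{\tau+1}$-closeness of the boundaries and smooth dependence of the billiard map on the boundary.

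The main obstacle I expect is the \emph{bookkeeping of the first-order splitting formula}: one must verify that perturbing the boundary at a single reflection point genuinely produces a first-order change in $\xi$ (resp. $\eta$) and not merely a reparametrization of the homoclinic orbit that leaves the invariant-manifold-relative position fixed — i.e. one must check the Melnikov integrand is not identically zero. This requires identifying which direction of boundary perturbation ``tilts'' the trajectory transversally to the stable manifold at the chosen reflection, and then tracking that infinitesimal tilt forward under $Df$ to $x_1$ and decomposing it in the $\lambda^{-1}$-eigendirection; transversality of $W^s$ and $W^u$ (Assumption~(3)) is exactly what makes the relevant projection surjective, and the freedom in choosing the bump profile and the reflection point (there are infinitely many homoclinic reflections to choose from, and they equidistribute toward $x_0$ and $x_1$) is what lets us avoid the measure-zero bad configurations. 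Once this non-degeneracy of the Melnikov term is established, the rest is routine: build the two-parameter family, check all the claimed identities, and invoke smooth dependence.
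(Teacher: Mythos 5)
Your strategy coincides with the paper's at the top level: perturb the boundary near isolated reflection points of the homoclinic orbit $X_{p/q+}$, keeping the support disjoint from $\gamma(x_0),\dots,\gamma(x_{q-1})$, so that $\Lambda_{p/q}$ and hence $\lambda,\theta$ are literally unchanged. The difference — and the gap — is in how you propose to verify $\frac{d}{d\epsilon_1}\xi\neq 0$, $\frac{d}{d\epsilon_2}\eta\neq 0$.

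Your Melnikov sum $\sum_i\partial_\epsilon h_\epsilon(z_i,z_{i+1})|_{\epsilon=0}$ is, by the envelope theorem, the first variation of the \emph{action} of the homoclinic orbit, i.e. of the Peierls barrier $B_{p/q}(z_0)$; it is not, and does not by itself control, the first variation of the normal-form coordinates $\xi$ and $\eta$ of the points $\Phi(f^{-n_0q+1}(z_0))$ and $\Phi(f^{m_0q}(z_0))$. Showing that this sum has one or two nonvanishing terms does not yield $d\xi/d\epsilon_1\neq 0$: the object you need is the displacement of the persistent transverse intersection along the local stable/unstable graphs, which is one further derivative (an implicit-function-theorem computation on the zero set of the splitting function, or a direct variational tracking of the minimizer), and you never carry out or even set up that computation. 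You flag exactly this issue yourself (``not merely a reparametrization … check the Melnikov integrand is not identically zero''), but your resolution is ``choose the bump profile,'' which is not an argument — it is precisely the claim that would need proof.

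The paper sidesteps the entire Melnikov bookkeeping by \emph{constructing} the perturbation so that the nondegeneracy is built in. Using the graph property of $X_{p/q+}$, it parametrizes local pieces $\Gamma_{-2},\Gamma_0^\pm,\Gamma_2$ of $W^s$ and $W^u$ over arclength near the five points $z_{-2},\dots,z_2$ and transfer maps $\eta_{-2},\eta_0$ along them; it then \emph{prescribes} $f^2_{\epsilon_1,\epsilon_2}\big(s,\varphi_{-2}(s)\big)=\big(\eta_{-2}(s+\epsilon_1),\varphi_0^-(\eta_{-2}(s+\epsilon_1))\big)$ near $z_{-2}$, and the analogous shifted image near $z_0$ for $\epsilon_2$, realizes this by deforming $\partial\Omega$ only near $\gamma(s_{-1}')$ and $\gamma(s_1')$ via the implicit function theorem, and leaves the boundary untouched elsewhere. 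By design the image of the relevant graph piece is translated by exactly $\epsilon_1$ (resp. $\epsilon_2$) along its own parametrization, so $z_0$ stays a point of the minimizer in $\mathcal{M}^+_{p/q}$ while $f_{\epsilon}^{-n_0q+1}(z_0)$ and $f_{\epsilon}^{m_0q}(z_0)$ slide along the unchanged local $W^u$, $W^s$ with unit speed. No nonvanishing condition is left to check. If you want to keep your Melnikov route, you would have to replace the action-variation formula by the implicit-function derivative of the intersection point and verify nondegeneracy of a second-order quantity; as written, the argument does not close.
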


\vspace{10 pt}

\begin{proof}  
Let us denote$$s_{i}'=\pi_1\big(f^{i}(z_0)\big), \quad i=-2,-1,0,1,2.$$
Because of the graph property of the orbit $X_{p/q+}$,  for $i=-2,-1,0,1,2$,  there exist  $\gamma_i^-<0$, 
$\gamma_i^+>0$ and  functions $\varphi_i$ such that  the following holds:

\begin{enumerate}
\item $\big\{(s,r): s\in[s_i'+\gamma_i^-,s_i'+\gamma_i^+],\; r\in[0,1]\big\}\cap X_{p/q+}=\{z_i\}.$
\item  Denote $$ \Gamma_i:=\{(s,\varphi_i(s)): s\in[s_i'+\gamma_i^-,s_i'+\gamma^+_i]\},\quad  i=\pm2,$$
and $$\Gamma_0^{\pm}:=\{(s,\varphi_0^{\pm}(s): s\in[s_0'+\gamma_0^-,s_0'+\gamma_0^+].$$
 The graphs $\Gamma_0^-$ and $\Gamma_{-2}$ are the local graphs of  the unstable manifold of $x_0$ near the points $z_{i}$, $i=0,-2$, 
and the graphs $\Gamma_0^+$ and $\Gamma_2$ are the local graphs of the stable manifold of $x_1$ near the points $z_i$, $i=0, 2$.
 \item There exist  strictly increasing $C^{\tau}$ functions $$\eta_i(t):[s_{i}'+\gamma_{i}^-,s_{i}'+\gamma_{i}^+]\to[s_{i+2}'+\gamma_{i+2}^-,s_{i+2}'+\gamma_{i+2}^+],\; i=-2,0$$
such that $ \eta(s_{i}')=s_{i+2}'$, $i=-2,0$,
$$f^2\big(s,\varphi_{-2}(s)\big)=\big(\eta_{-2}(s),\varphi_{0}^-(\eta_{-2}(s))\big),\quad s\in:[s_{-2}'+\gamma_{-2}^-,s_{-2}'+\gamma_{-2}^+],$$
and
$$ f^2\big(s,\varphi_0^+(s))=(\eta_0(s),\varphi_2(\eta_0(s))\big),\quad s\in:[s_{0}'+\gamma_{0}^-,s_{0}'+\gamma_{0}^+].$$
\end{enumerate}

Let  $0<\bar{\epsilon}_1< \frac{1}{3}\min\{|\gamma_{-2}^+|,|\gamma_{-2}^-|\}$, $0<\bar{\epsilon}_2<\frac{1}{3}\min\{|\gamma_0^+|,|\gamma_0^-|\}$ and $\bar{\epsilon}_1$, $\bar{\epsilon}_2$ be small enough. 
For $\epsilon_1\in[-\bar{\epsilon}_1,\bar{\epsilon}_1]$ and $\epsilon_2\in[-\bar{\epsilon}_2,\bar{\epsilon}_2]$, we  define a deformation  $\Omega_{\epsilon_1,\epsilon_2}$ of the domain $\Omega$, with the corresponding billiard map $f_{\epsilon_1,\epsilon_2}$ such that
\begin{enumerate}
\item[i.] If  $s\in[s_{-2}'-\epsilon_1,s_{-2}'+\epsilon_1]$ and $r=\varphi_{-2}(s)$, then $$f^2_{\epsilon_1,\epsilon_2}(s,r)=\big(\eta_{-2}(s+\epsilon_1),\varphi_0^-(\eta_{-2}(s+\epsilon_1))\big).$$
\item[ii.] If $s\in[s_{0}'-\epsilon_2,s_0'+\epsilon_2]$, and $r=\varphi_0^{+}(s)$,  then $$f^2_{\epsilon_1,\epsilon_2}(s,r)=\big(\eta_0(s+\epsilon_2),\varphi_2(\eta_1(s+\epsilon_2)\big).$$
\item[iii.]  Let $$\epsilon_1'=\max\{\big|\pi_1(f(s_{-2}\pm\epsilon_1,\varphi_{-2}(s_{-2}\pm\epsilon_1))-s_{-1}'\big|\}$$and$$\epsilon_2'=\max\{\big|\pi_1(f(s_0\pm\epsilon_2,\varphi_{0}^+(s_{0}\pm\epsilon_2))-s_1'\big|\}.$$If $ s\not\in [s_{-1}'-3\epsilon_1',s_{-1}'+3\epsilon_1']\cup[s_1'-3\epsilon_2',s_1'+3\epsilon_2']$, then
$$\partial\Omega_{\epsilon_1,\epsilon_2}(s)=\partial\Omega(s).$$
\end{enumerate}
The existence of such domain is due to the implicit function theorem for small enough $\bar{\epsilon}_1$ and $\bar{\epsilon}_2$.

By the construction, we could see that  for $f_{\epsilon_1,\epsilon_2}$:
  
\begin{enumerate}
\item[a.] $X_{p/q}$ is still the minimal periodic orbit in $\M_{\frac{p}{q}}$; 
 \item[b.] the orbit $\{f_{\epsilon_1,\epsilon_2}^i(z_0), i\in\mathbb{Z}\}$ is the minimal orbit in $\M_{\frac{p}{q}+}$;
 \item[c.] near $X_{p/q}$, the billiard maps $f_{\epsilon_1,\epsilon_2}$ and $f$ are the same;
 \item[d.] the point $f^{-n_0q+1}_{\epsilon_1,\epsilon_2}(z_0) $ moves non-degenerately as $\epsilon_1$ change. So does the point $f^{m_0q}_{\epsilon_1,\epsilon_2}$ with respect to $\epsilon_2$.
 \end{enumerate}
 These imply that the parametrized family of billiard maps $f_{\epsilon_1,\epsilon_2}$ satisfy the requirements of the Lemma.
 \end{proof}
 
 \vspace{10 pt}

\begin{lemma} \label{generic-lemma}For a generic billiard map $f$, we have that   for each $p/q\in\mathbb{Q}\cap(0,1/2)$, the constants $C_{p,q}(f)$ and $C_{p,q}'(f)$ in Theorem \ref{lemma-apr} are not zero.\\
\end{lemma}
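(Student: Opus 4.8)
The plan is to deduce Lemma \ref{generic-lemma} from the parametrized family constructed in Lemma \ref{lemma-parametrized}, by showing that the functions $\epsilon_1 \mapsto C_{p,q}(f_{\epsilon_1,\epsilon_2})$ and $\epsilon_1 \mapsto C'_{p,q}(f_{\epsilon_1,\epsilon_2})$ (and likewise in $\epsilon_2$) are real-analytic, or at least have isolated zeros, so that the ``bad'' set of parameters in each finite-dimensional slice is nowhere dense, and then apply a Baire-category / countable-intersection argument over all $p/q \in \mathbb{Q}\cap(0,1/2)$. First I would fix a rational $p/q$ and recall from \eqref{evenC} and \eqref{oddC} the explicit expressions
$$C_{p,q}=\Big(\tfrac{2C_{q+}\eta^2}{1-\lambda^2}+\tfrac{2C_{q-}\lambda^2\xi^2}{1-\lambda^2}-(\xi^2+\eta^2)\sin\theta\cos\theta-\xi\eta\Big)\lambda^{m_0+n_0},$$
and the analogous formula for $C'_{p,q}$, noting that $\lambda$, $\theta$, $C_{q+}$, $C_{q-}$ are \emph{invariant} along the family $f_{\epsilon_1,\epsilon_2}$ (by the first display in Lemma \ref{lemma-parametrized}, since they depend only on the linearization $\Lambda_{p/q}$ and on the data $\mathbb{W}(X_{p/q})$, $\mathcal{Z}_\pm$ of the periodic orbit, which is unchanged near $X_{p/q}$ by property (c) there), while $\xi$ and $\eta$ are precisely the homoclinic ``splitting parameters'' $B=(\xi,0)$, $A=(0,\eta)$ that move non-degenerately.

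The key computation is then to view $C_{p,q}$ and $C'_{p,q}$ as quadratic polynomials in the two real variables $(\xi,\eta)$ with coefficients fixed by the periodic orbit. Concretely one checks that the quadratic form $Q(\xi,\eta):=\tfrac{2C_{q+}}{1-\lambda^2}\eta^2+\tfrac{2C_{q-}\lambda^2}{1-\lambda^2}\xi^2-(\xi^2+\eta^2)\sin\theta\cos\theta-\xi\eta$ is not identically zero — indeed the coefficient of $\xi\eta$ is $-1\neq 0$ — hence its zero set is a union of at most two lines through the origin in the $(\xi,\eta)$-plane. Now I would move $\epsilon_1$: by Lemma \ref{lemma-parametrized}, $\tfrac{d}{d\epsilon_1}\xi(f_{\epsilon_1,\epsilon_2})\neq 0$ at $(0,0)$ while $\eta$ stays (one may also use the $\epsilon_2$-direction for $\eta$), so the curve $\epsilon_1\mapsto(\xi(f_{\epsilon_1,\epsilon_2}),\eta(f_{\epsilon_1,0}))$ is transverse to any fixed line in the plane; therefore it meets the zero locus of $Q$ in an isolated set of $\epsilon_1$, and after an arbitrarily small perturbation of $\epsilon_1$ we may assume $C_{p,q}\neq 0$. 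The same argument with the $\epsilon_2$-family (or a combined $(\epsilon_1,\epsilon_2)$ argument, since generically the two zero-lines are avoided by a generic point on the two-parameter surface) handles $C'_{p,q}$; one only has to check that the odd-$n$ quadratic form in \eqref{oddC} also has a nonzero $\xi\eta$-coefficient, which is again $-1$, so the same reasoning applies verbatim.

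Having shown that for each fixed $p/q$ the set $\mathcal{U}_{p/q}$ of billiard maps $f$ with $C_{p,q}(f)\neq 0$ and $C'_{p,q}(f)\neq 0$ is open (clear, since $C_{p,q},C'_{p,q}$ depend continuously on $f$ in the $C^\tau$-topology through finitely many derivatives of the generating function along the finite orbits involved) and dense (by the perturbation just described, applied near any $f\in\mathcal{E}^\tau$), the lemma follows by taking $\bigcap_{p/q}\mathcal{U}_{p/q}$, a countable intersection of open dense sets, intersected with the residual set $\mathcal{E}^\tau$. The main obstacle I anticipate is the bookkeeping needed to justify that $C_{q+},C_{q-},\lambda,\theta$ really are unchanged under $f_{\epsilon_1,\epsilon_2}$ and that $C_{p,q},C'_{p,q}$ depend on $f$ only through finitely much local data (so that ``open'' and ``continuous dependence'' are legitimate) — this requires invoking property (c) of Lemma \ref{lemma-parametrized} together with the hyperbolic persistence of the local stable/unstable manifolds and of the normal form in Lemma \ref{normal-form}. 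A secondary subtlety is ensuring the deformation in Lemma \ref{lemma-parametrized} genuinely produces a \emph{strictly convex} $C^{\tau+1}$ domain for small $\bar\epsilon_1,\bar\epsilon_2$, so that $f_{\epsilon_1,\epsilon_2}$ stays in the admissible class; this is where the smallness conditions $\bar\epsilon_1<\tfrac13\min\{|\gamma_{-2}^\pm|\}$ etc.\ and the implicit function theorem are used.
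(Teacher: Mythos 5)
Your proposal is correct and follows essentially the same structure as the paper's proof: openness by continuity of $C_{p,q}$ and $C'_{p,q}$, density via the deformation of Lemma~\ref{lemma-parametrized}, and a Baire intersection over the rationals. You additionally spell out the step the paper leaves implicit — that $C_{p,q}$ and $C'_{p,q}$ are quadratic forms in $(\xi,\eta)$ with $\xi\eta$-coefficient $-1$, hence never identically zero, so the non-degenerate motion of the homoclinic parameters under $f_{\epsilon_1,\epsilon_2}$ necessarily moves off their zero sets — and this is exactly the mechanism the paper's terse argument relies on.
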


\begin{proof}For each $p/q\in\mathbb{Q}\cap(0,1/2]$,  let us denote $\mathcal{G}_{p/q}$ the set of billiard maps $f$ such that $C_{p/q}(f)\neq0$ and $C_{p/q}'(f)\neq0$.  Clearly $\mathcal{G}_{p/q}$ is an open set, since $C_{p/q}(f)$ and $C_{p/q}'(f)$ are continuous with respect to $f$ in the $C^{\tau}$-topology.  If $C_{p/q}(f)=0$, by Lemma~\ref{lemma-parametrized}, we could find a billiard map $f'$, which is arbitrary close to $f$ in the $C^{\tau}$-topology, such that  $C_{p/q}(f')\neq0$. Therefore $\mathcal{G}_{p/q}$ is a dense open subset.
Then we can choose the generic set  to the residual set  
\begin{equation}\label{Gprime}
\mathcal{G}'=\cap_{p/q\in\mathbb{Q}\cap(0,1/2]}\mathcal{G}_{p/q}.
\end{equation}
In particular,
 each billiard map $f\in\mathcal{G}'$  verifies  the assertion  of the Lemma.
\end{proof}

\bigskip

{We can now conclude this section by proving assertion (2) in Main Theorem.}

{\begin{proof}[{\bf Main Theorem, item (2)}]
Let us denote $\mathcal{E}'$ the set of strictly convex billiard tables, for which the induced billiard maps  belong to $\mathcal{G}'$, {as defined in \eqref{Gprime}}. Consider the set $$\mathcal{E}=\mathcal{E}'\cap\mathcal{E}^{\tau}.$$
Clearly, $\mathcal{E}$ is a residual set. Then the assertion  (2) of Main Theorem follows from Lemmas~\ref{log-limit} and \ref{generic-lemma}. This concludes the proof.
\end{proof}
}

\vspace{10 pt}

\brm \label{rm:general-case2} In order to extend   Main Theorem from Aubry-Mather periodic orbits   
to arbitrary hyperbolic periodic orbits of a generic domain (namely, determine the eigenvalue of the linearization 
of the associated Poincare return map from the Marked Length Spectrum),  we  face two types of difficulties.  
\begin{itemize}
\item By a result in \cite{XZ}, for a hyperbolic periodic orbit there is a homoclinic orbit, which 
is generically transverse. Existence of a transverse homoclinic orbit implies existence of a sequence 
of hyperbolic periodic orbits accumulating to it. To proceed with our scheme, we need to determine the corresponding 
sequence in the Marked Length Spectrum. In the light of Remark \ref{rm:general-case1}, this should provide 
Theorem  \ref{lemma-apr}. 
\item  In order to prove Lemma \ref{log-limit}, we need to know that constant $C_{p,q}$ and $C'_{p,q}$ are 
non-zero. In Lemma \ref{lemma-parametrized} we essentially use the graph property of $p/q+$ orbits,
which is however not true in general.  \\
\end{itemize}
\erm


\end{document}